\documentclass[12pt]{amsart}
\usepackage[english]{babel}
\selectlanguage{english}
\usepackage{amsmath,amssymb,amsfonts,amsthm,amsopn}
\usepackage{graphics}
\usepackage{comment}

\setlength{\textwidth}{12,5cm} \setlength{\textheight}{20cm}



\newcommand{\modsp}{modulation space}

\newtheorem{tm}{Theorem}[section]
\newtheorem{lemma}[tm]{Lemma}

\newtheorem{theorem}{Theorem}[section]
\newtheorem{corollary}[theorem]{Corollary}

\theoremstyle{definition}
\newtheorem{proposition}[theorem]{Proposition}

\newtheorem{definition}[theorem]{Definition}

\newcommand{\beqa}{\begin{eqnarray*}}
\newcommand{\eeqa}{\end{eqnarray*}}

\newcommand{\field}[1]{\mathbb{#1}}
\newcommand{\bR}{\field{R}}        
        %
        %



\def\la{\lambda}

 \def\cF{\mathcal{F}}              
 \def\cS{\mathcal{S}}

 \def\cC{\mathcal{C}}

 \def\cN{\mathcal{N}}

\def\a{\aleph}

\def\rd{\bR^d}

\def\lrd{L^2(\rd)}

\def\intrd{\int_{\rd}}

\def\R{\right)}

\def\<{\left<}
\def\>{\right>}

\def\inv{^{-1}}

\def\mv1{M_v^1}


\hyphenation{Cara-theo-do-ry}
\hyphenation{Dau-be-chies}
\hyphenation{Barg-mann}
\hyphenation{dis-tri-bu-ti-ons}
\hyphenation{pseu-do-dif-fe-ren-tial}
\hyphenation{ortho-normal}
\hyphenation{mo-du-la-tion}



\def\o{\xi}
\def\a{\alpha}

\def\R{\mathbb{R}}
\def\Ren{\mathbb{R}^d}

\def\sch{\mathcal{S}}

\def\Fur{\mathcal{F}}
\def\Tau{\mathcal{T}}

\def\f{\varphi}

\def\Sn2{S_{2}(L^{2}(\Ren))}
\def\S1{S_{1}(L^{2}(\Ren))}
\def\sig00{\sigma_{0,0}}

\def\la{\langle}
\def\ra{\rangle}




\newcommand{\fpq}{W(\mathcal{F}L^p,L^q)}

\begin{document}

\title[Strichartz Estimates]{Strichartz Estimates for the Schr\"odinger equation}

\author{Elena Cordero and Davide Zucco}
\address{Department of Mathematics,  University of Torino, Italy}
\email{elena.cordero@unito.it}
\address{Department of Mathematics,  University of Torino, Italy}
\email{davide.zucco@unito.it}

\subjclass[2000]{42B35,35B65, 35J10, 35B40} \keywords{Dispersive estimates, Strichartz estimates, Wiener amalgam spaces, Modulation spaces, Schr\"odinger equation}
\date{}

\begin{abstract}
The objective of this paper is to report on recent progress on
Strichartz estimates for the Schr\"odinger equation and to present
the state-of-the-art. These estimates have been  obtained in
Lebesgue spaces, Sobolev spaces and, recently, in Wiener amalgam
and modulation spaces. We present and  compare the different
technicalities. Then, we illustrate   applications to
well-posedness.
\end{abstract}

\maketitle

\section{Introduction}
In11 this note, we focus on the Cauchy problem  for Schr\"odinger
equations. To begin with, the Cauchy problem for the free
Schr\"odinger equation reads as follows
\begin{equation}\label{cp}
\begin{cases}
i\partial_t u+\Delta u=0\\
u(0,x)=u_0(x),
\end{cases}
\end{equation}
with $t\in\R$ and $x\in\R^d,\, d\geq1$. In terms of the Fourier
transform, we can write the solution as follows
\begin{equation}\label{scflow}
u(t,x)=\left(e^{it\Delta}u_0\right)(x):=\int_{\R^d} e^{2\pi
ix\cdot\xi}e^{- 4 \pi^2 it|\xi|^2} \widehat{u_0}(\xi) d\xi,
\end{equation}
where the Fourier multiplier $e^{it\Delta}$ is known as
\emph{Schr\"odinger propagator}.
The corresponding inhomogeneous equation is
\begin{equation}\label{cp2}
\begin{cases}
i\partial_t u+\Delta u=F(t,x)\\
u(0,x)=u_0(x),
\end{cases}
\end{equation}
with $t>0$ and $x\in\R^d,\, d\geq1$. By Duhamel's principle and
\eqref{scflow}, the integral version of \eqref{cp2} has the form
\begin{equation}\label{eqschrnonomo}
u(t,x)= e^{it\Delta}u_0(\cdot) + \int_0^t e^{i(t-s)\Delta} F(s,\cdot) ds.
\end{equation}

The study of space-time integrability properties of the solution
to \eqref{scflow} and \eqref{eqschrnonomo} has been pursued by
many authors in the last thirty years. The matter of fact is given by the
Strichartz estimates, that have become a fundamental and amazing
tool for the study of PDE's. They have been studied in the
framework  of different function/distribution spaces,\hfill like\hfill
Lebesgue,\hfill Sobolev,\hfill Wiener\hfill amalgam\hfill  and\hfill modulation \\
spaces and have
found  applications to well-posedness and scattering theory for
nonlinear Schr\"odinger equations
\cite{berezinshubin,cordero3,Dancona05,
degosson,GinibreVelo92,Kato70,keel,strichartz,tao2,tao,Yajima87}.

In this paper we exhibit these problems. First, in Section 3, we
introduce  the dispersive estimates and show how can be carried
out for these different spaces. The classical $L^p$ dispersive estimates
read as follows
$$\|e^{it\Delta} u_0\|_{L^r_x}\lesssim |t|^{-d(\frac12-\frac1r)} \|u_0\|_{L^{r'}_x},\quad 2\leq r\leq\infty,\quad\frac1r+\frac1{r'}=1.
$$

Section 4 is devoted  to the study
of  Strichartz estimates.  The nature of these estimates is
highlighted and  the results among different kinds of spaces are
compared with each others. Historically, the $L^p$ spaces \cite{GinibreVelo92,Kato70,keel,tao,Yajima87}
were the first to be looked at. The celebrated
homogeneous Strichartz estimates  for the solution
$u(t,x)=\left(e^{it\Delta}u_0\right)(x)$ read
\begin{equation}\label{S1}
\|e^{it\Delta} u_0\|_{L^q_t L^r_x}\lesssim \|u_0\|_{L^2_x},
\end{equation}
for $q\geq2$, $r\geq 2$, with
$2/q + d/r = d/2$, $(q,r,d)\not=(2,\infty,2)$, i.e., for $(q,r)$ {\it Schr\"odinger admissible} (see Definition \ref{DefSchradm}). Here, as usual, we set
\[
\|F\|_{L^q_tL^r_x}=\left(\int\|F(t,\cdot)\|^q_{L^r_x}\,dt\right)^{1/q}.
\]

In the sequel, the estimates for
Sobolev spaces were essentially  derived from the Lebesgue ones.
Recently, several authors (\cite{benyi,
benyi3,cordero,cordero2,cordero3,baoxiang,baoxiang2}) have turned
their attention to fixed time and space-time estimates for the
Schr\"odinger propagator between spaces widely  used in
time-frequency analysis, known as Wiener amalgam spaces and
modulation spaces. The first appearance of amalgam spaces can be
traced to  Wiener in his development of the theory of generalized
harmonic analysis \cite{wiener,wiener2,wiener3} (see \cite{Heil03}
for more details). In this setting, Cordero and Nicola
\cite{cordero, cordero2, cordero3} have discovered that the
pattern to obtain dispersive and Strichartz estimates is similar
to that of Lebesgue spaces.  The main idea is to show that the
fundamental solution $K_t$ (see  \eqref{chirp0} below) lies in the
Wiener amalgam space $W(L^p,L^q)$ (see Section 2 for the
definition) which generalizes the classical  $L^p$ space and,
consequently, provides a different information  between the local
and global behavior of the solutions. Beside the similar
arguments,  we point out also some differences, mainly in proving
the sharpness of dispersive estimates and Strichartz estimates.
Indeed, dilation arguments in Wiener amalgam and modulation spaces
don't work as in the classical $L^p$ spaces.

 Modulation spaces were introduced by
Feitchinger in 1980 and then were also redefined by Wang
\cite{baoxiang} using  isometric decompositions. The two different
definitions allow to look at the problem in two different manners.
As a result,  in \cite{baoxiang2}, a beautiful use of
interpolation theory on modulation spaces  allows to combine the
estimates obtained by means of the  classical  definition
 in \cite{benyi, benyi3} and the isometric definition in \cite{baoxiang, baoxiang2}, to obtain  more
general  fixed time  estimates in this framework. In order to
control the growth of singularity at $t=0$, we usually have the
restriction $d(1/2-1/p)\leqslant 1$; cf. \cite{cazenave, keel}. By
using the isometric decomposition in the frequency space, as in
\cite{baoxiang,baoxiang2}, one can remove the singularity at $t=0$
and  preserve the decay at $t=\infty$ in certain modulation
spaces. 

The Strichartz estimates  can be applied, e.g., to the well-posedness of non-linear Schr\"odinger equations or of linear Schr\"odinger equations with time-dependent potentials. We shall show  examples  in the last Section 5.\par
\vskip0.3truecm

 \textbf{Notation.} We define $|x|^2=x\cdot x$, for
$x\in\Ren$, where $x\cdot y=xy$ is the inner
 product on $\Ren$. The space of smooth functions with compact support is denoted by $\cC_0^\infty(\rd)$, the Schwartz class
  by $\sch(\Ren)$, the space of tempered distributions by $\sch'(\Ren)$. The Fourier transform is normalized to be ${\hat{f}}(\o)=\Fur f(\o)=\int f(t)e^{-2\pi i t\o}dt$.  Translation and modulation operators ({\it time and frequency shifts}) are defined, respectively, by $$ T_xf(t)=f(t-x)\quad{\rm and}\quad M_{\o}f(t)= e^{2\pi i \o  t}f(t).$$ We have the formulas $(T_xf)\hat{} = M_{-x}{\hat
{f}}$, $(M_{\o}f)\hat{} =T_{\o}{\hat {f}}$, and $M_{\o}T_x=e^{2\pi i x\o}T_xM_{\o}$. The notation $A\lesssim B$ means $A\leq c
B$ for a suitable constant $c>0$, whereas $A \asymp B$ means $c\inv A \leq B \leq c A$, for some $c\geq 1$. The symbol $B_1 \hookrightarrow B_2$
denotes the continuous embedding of the linear space $B_1$ into $B_2$.

\section{Function spaces and preliminaries}
In this section we present the function/distribution spaces we
work with, and the properties used in our study.
\subsection{Lorentz spaces}
 (\cite{stein93,steinweiss}). We recall that the Lorentz space $L^{p,q}$ on $\mathbb{R}^d$ is defined as the space of tempered distributions $f$ such that
\[
\|f\|^\ast_{pq}=\left(\frac{q}{p}\int_0^\infty [t^{1/p} f^\ast (t)]^q\frac{dt}{t}\right)^{1/q}<\infty,
\]
when $1\leq p<\infty$, $1\leq q<\infty$, and
\[
\|f\|^\ast_{pq}=\sup_{t>0} t^{1/p}f^\ast(t)<\infty
\]
when $1\leq p\leq\infty$, $q=\infty$. Here, as usual, $\lambda(s)=|\{|f|>s\}|$ denotes the distribution function of $f$
and $f^\ast(t)=\inf\{s:\lambda(s)\leq t\}$.\par One has $L^{p,q_1}\hookrightarrow L^{p,q_2}$ if $q_1\leq q_2$, and $L^{p,p}=L^p$. Moreover, for $1<p<\infty$ and $1\leq q\leq\infty$, $L^{p,q}$ is a normed space and its norm $\|\cdot\|_{L^{p,q}}$ is equivalent to the above quasi-norm $\|\cdot\|^\ast_{pq}$. \par
The function $|x|^{-\alpha}$ lives in $L^{d/\alpha,\infty}$, $0<\a<d$ but observe that this function doesn't live in any $L^p,\, 1\leqslant p\leqslant \infty$.
We now recall the following classical Hardy-Littlewood-Sobolev fractional integration theorem (see e.g. \cite[Theorem 1, pag 119]{stein} and \cite{stein93}), which will be used in the sequel
\begin{proposition}
Let $d\geqslant 1$, $0<\alpha<d$ and $1<p<q<\infty$ such that
\begin{equation}\label{conv1}
\frac{1}{q}=\frac{1}{p}-\frac{d-\alpha}{d}.
\end{equation}
Then the following estimate
\begin{equation}\label{eqhlsom}
||\,|\cdot|^{-\alpha}*f||_q \lesssim\,||f||_p
\end{equation}
holds for all $f\in L^p(\R^d)$.
\end{proposition}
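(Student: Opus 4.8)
The plan is to prove the weak-type estimate first and then upgrade to the strong-type bound by the Marcinkiewicz interpolation theorem. First I would observe that, since $|x|^{-\alpha}$ belongs to the Lorentz space $L^{d/\alpha,\infty}(\R^d)$ (as recalled just above the statement), the natural tool is the weak Young inequality: convolution maps $L^{p}\times L^{d/\alpha,\infty}$ into $L^{q,\infty}$ whenever $1/q+1 = 1/p + \alpha/d$, which is exactly the condition \eqref{conv1}. So the first step is to establish, for a fixed $f\in L^p$ with $\|f\|_p=1$, the weak bound
\[
\big|\{x : |\,|\cdot|^{-\alpha}*f(x)|>\lambda\}\big| \lesssim \lambda^{-q}.
\]

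To get this, I would split the kernel at a radius $R$ to be chosen: write $|x|^{-\alpha} = K_1 + K_2$ where $K_1 = |x|^{-\alpha}\mathbf 1_{\{|x|\le R\}}$ and $K_2 = |x|^{-\alpha}\mathbf 1_{\{|x|>R\}}$. For the local part, $K_1\in L^{1}$ with $\|K_1\|_1 \asymp R^{d-\alpha}$ (here $\alpha<d$ is used), so by ordinary Young $\|K_1*f\|_p \lesssim R^{d-\alpha}$. For the tail, $K_2\in L^{p'}$ with $\|K_2\|_{p'}^{p'} \asymp R^{d-\alpha p'}$ (this needs $\alpha p' > d$, equivalently $1/q>0$, which holds), so by Young again $\|K_2*f\|_\infty \lesssim R^{d/p'-\alpha} = R^{-d/q}$. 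Then $|\,|\cdot|^{-\alpha}*f| > \lambda$ forces $|K_1*f|>\lambda/2$ on the relevant set once $R$ is chosen so that the $L^\infty$ tail is $\le\lambda/2$, i.e. $R \asymp \lambda^{-q/d}$; Chebyshev applied to $\|K_1*f\|_p$ then yields the measure bound $\lesssim (R^{d-\alpha}/\lambda)^p \asymp \lambda^{-q}$ after substituting this $R$ and simplifying exponents using \eqref{conv1}. This shows $|\cdot|^{-\alpha}*(\cdot)$ is weak-type $(p,q)$.

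Finally, to pass from weak-type to the claimed strong-type inequality \eqref{eqhlsom}, I would pick $p_0<p<p_1$ close to $p$ (possible since $1<p<\infty$, and one checks the corresponding $q_0,q_1$ satisfy $q_0,q_1\in(1,\infty)$ because $1<q<\infty$), run the weak-type argument at both endpoints, and invoke the Marcinkiewicz interpolation theorem to conclude the strong $(p,q)$ bound with the implicit constant depending only on $d,\alpha,p$. The only mild subtlety — and the step I would be most careful about — is bookkeeping the exponent arithmetic so that the choice $R\asymp\lambda^{-q/d}$ really produces $\lambda^{-q}$, and checking that the endpoint exponents $p_0,p_1$ can be chosen within the admissible range; these are routine but must be done consistently with the normalization \eqref{conv1}. (Alternatively, one can bypass interpolation entirely by a direct dyadic decomposition of $|x|^{-\alpha}$ combined with the layer-cake formula for $f$, but the split-kernel-plus-Marcinkiewicz route is the cleanest to write.)
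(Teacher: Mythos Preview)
Your argument is correct and is precisely the classical kernel-splitting proof (weak-type via Chebyshev, then Marcinkiewicz) that appears in the reference the paper cites, namely \cite[Theorem~1, p.~119]{stein}. The paper itself does not supply a proof of this proposition; it merely recalls the statement and defers to Stein, so there is nothing further to compare.
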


\bigskip
\vskip0.1truecm
 \textbf{Potential and Sobolev spaces.} For
$s\in\R$, we define the Fourier multipliers $\la \Delta \ra^s
f=\cF^{-1}((1+|\cdot|^2)^{s/2}\hat{f})$, and $| \Delta |^s
f=\cF^{-1}(|\cdot|^s\hat{f})$. Then, for $1\leq p\leq\infty$, the
potential space ~\cite{bergh-lofstrom} is defined by
$$W^{s,p}=  \{f\in\sch',\,  \la \Delta \ra^s
f\in L^p\}$$ with norm $\|f\|_{W^{s,p}}=\|\la \Delta \ra^s
f\|_{L^p}$. The homogeneous potential space ~\cite{bergh-lofstrom}
is defined by
$$\dot{W}^{s,p}=  \{f\in\sch',\,  | \Delta |^s
f\in L^p\}$$ with norm $\|f\|_{\dot{W}^{s,p}}=\|| \Delta |^s
f\|_{L^p}$.

For $p=2$ the previous spaces are called Sobolev spaces $H^s$
and homogeneous Sobolev spaces $\dot{H^s}$, respectively.

\subsection{Wiener amalgam spaces}
(\cite{feichtinger80,feichtinger83,feichtinger90,fournier-stewart85,Fei98}). Let $g \in \cC_0^\infty$ be a test function that satisfies $\|g\|_{L^2}=1$. We will refer to $g$ as a window function.  For $1\leq p\leq \infty$, recall the $\cF L^p$ spaces, defined by
$$
\cF L^p(\rd)=\{f\in\cS'(\rd)\,:\, \exists \,h\in L^p(\rd),\,\hat h=f\};
$$
they are Banach spaces equipped with the norm
\[
\| f\|_{\cF L^p}=\|h\|_{L^p},\quad\mbox{with} \,\hat h=f.
\]
In the same way, for $1< p< \infty$, $1\leq q\leq \infty$, the Banach spaces $\cF L^{p,q}$ are defined by 
$$
\cF L^{p,q}(\rd)=\{f\in\cS'(\rd)\,:\, \exists \,h\in L^{p,q}(\rd),\,\hat h=f\};
$$
equipped with the norm
\[
\| f\|_{\cF L^{p,q}}=\|h\|_{L^{p,q}},\quad\mbox{with} \,\hat h=f.
\]
Let $B$ one of the following Banach spaces: $L^p, \cF L^p$, $1\leq p\leq \infty$, $\cF L^{p,q}$, $1< p< \infty$, $1\leq q\leq \infty$, valued in a Banach space, or also spaces obtained from these by real or complex interpolation.
Let $C$ be the $L^p$ space, $1\leq p\leq\infty$, scalar-valued. For any given function $f$ which is locally in $B$ (i.e. $g f\in B$, $\forall g\in\cC_0^\infty$), we set $f_B(x)=\| fT_x g\|_B$.

The {\it Wiener amalgam space} $W(B,C)$ with local component $B$ and global component  $C$ is
defined as the space of all functions $f$ locally in $B$ such that $f_B\in C$. Endowed with the norm $\|f\|_{W(B,C)}=\|f_B\|_C$, $W(B,C)$ is a Banach space. Moreover, different choices of $g\in \cC_0^\infty$  generate the same space and yield equivalent norms.

If  $B=\Fur L^1$ (the Fourier algebra),  the space of admissible windows for the Wiener amalgam spaces $W(\Fur L^1,C)$ can be
enlarged to the so-called Feichtinger algebra $W(\Fur L^1,L^1)$. Recall  that the Schwartz class $\sch$ is dense in $W(\Fur L^1,L^1)$.\par We use the following definition of mixed Wiener amalgam norms. Given a measurable function $F$ of the two variables $(t,x)$ we set
\[
\|F\|_{W(L^{q_1},L^{q_2})_tW(\Fur L^{r_1},L^{r_2})_x}= \|
\|F(t,\cdot)\|_{W(\Fur L^{r_1},L^{r_2})_x}\|_{W(L^{q_1},L^{q_2})_t}.
\]
Observe  that \cite{cordero}
\[
\|F\|_{W(L^{q_1},L^{q_2})_tW(\Fur L^{r_1},L^{r_2})_x}= \|F\|_{W\left(L^{q_1}_t(W(\Fur L^{r_1}_x,L^{r_2}_x)),L^{q_2}_t\right)}.
\]
The following properties of Wiener amalgam spaces  will be frequently used in the sequel.
\begin{lemma}\label{WA}
  Let $B_i$, $C_i$, $i=1,2,3$, be Banach spaces  such that $W(B_i,C_i)$ are well   defined. Then,
  \begin{itemize}
  \item[(i)] \emph{Convolution.}
  If $B_1\ast B_2\hookrightarrow B_3$ and $C_1\ast
  C_2\hookrightarrow C_3$, we have
  \begin{equation}\label{conv0}
  W(B_1,C_1)\ast W(B_2,C_2)\hookrightarrow W(B_3,C_3).
  \end{equation}
   In particular, for every
$1\leq p, q\leq\infty$, we have
\begin{equation}\label{p2}
\|f\ast u\|_{\fpq}\leq\|f\|_{W(\Fur L^\infty,L^1)}\|u\|_{\fpq}.
\end{equation}
  \item[(ii)]\emph{Inclusions.} If $B_1 \hookrightarrow B_2$ and $C_1 \hookrightarrow C_2$,
   \begin{equation*}
   W(B_1,C_1)\hookrightarrow W(B_2,C_2).
  \end{equation*}
  \noindent Moreover, the inclusion of $B_1$ into $B_2$ need only hold ``locally'' and the inclusion of $C_1 $ into $C_2$  ``globally''.
   In particular, for $1\leq p_i,q_i\leq\infty$, $i=1,2$, we have
  \begin{equation}\label{lp}
  p_1\geq p_2\,\mbox{and}\,\, q_1\leq q_2\,\Longrightarrow W(L^{p_1},L^{q_1})\hookrightarrow
  W(L^{p_2},L^{q_2}).
  \end{equation}
  \item[(iii)]\emph{Complex interpolation.} For $0<\theta<1$, we
  have
\[
  [W(B_1,C_1),W(B_2,C_2)]_{[\theta]}=W\left([B_1,B_2]_{[\theta]},[C_1,C_2]_{[\theta]}\right),
  \]
if $C_1$ or $C_2$ has
absolutely continuous norm.
    \item[(iv)] \emph{Duality.}
    If $B',C'$ are the topological dual spaces of the Banach spaces $B,C$ respectively, and
    the space of test functions $\cC_0^\infty$ is dense in both $B$ and $C$, then
\begin{equation}\label{duality}
W(B,C)'=W(B',C').
\end{equation}
  \end{itemize}
  \end{lemma}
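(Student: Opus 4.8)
The plan is to reduce all four assertions to elementary facts about $B$-valued sequence spaces by means of a bounded uniform partition of unity. First I would fix $\varphi\in\cC_0^\infty(\rd)$ supported in the unit cube $Q=[0,1]^d$ with $\sum_{n\in\zd}T_n\varphi\equiv 1$, together with $\psi\in\cC_0^\infty$ equal to $1$ on $Q$ and supported in $2Q$, so that $(T_n\psi)(T_n\varphi)=T_n\varphi$. Given a Banach space $B$ and a scalar $L^p$-space $C$, let $E(B,C)$ be the space of sequences $(b_n)_{n\in\zd}$ in $B$ for which the step function $s_{(b_n)}:=\sum_{n}\|b_n\|_B\,\chi_{n+Q}$ belongs to $C$, normed by $\|s_{(b_n)}\|_C$ (so $E(B,L^q)\cong\ell^q(B)$). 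The map $j\colon f\mapsto(f\,T_n\varphi)_{n}$ is bounded from $W(B,C)$ to $E(B,C)$, the map $\pi\colon(b_n)\mapsto\sum_n(T_n\psi)\,b_n$ is bounded from $E(B,C)$ back to $W(B,C)$, and $\pi$ is a left inverse of $j$ since $\pi(j(f))=\sum_n(T_n\psi)(fT_n\varphi)=f$; moreover the constants depend only on $\varphi,\psi$. This realizes $W(B,C)$ as a retract of $E(B,C)$, and the window-independence recalled just above the statement is exactly the content of this fact.

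For \textbf{(i)} I would write $f=\sum_n(T_n\psi)(fT_n\varphi)$, $u=\sum_m(T_m\psi)(uT_m\varphi)$, so that $f\ast u=\sum_{n,m}\bigl[(T_n\psi)(fT_n\varphi)\bigr]\ast\bigl[(T_m\psi)(uT_m\varphi)\bigr]$, the $(n,m)$-term being supported in $n+m+3Q$. Localizing $f\ast u$ by $T_x\psi$ meets only boundedly many such terms (those with $n+m$ near $x$), and on each the hypothesis $B_1\ast B_2\hookrightarrow B_3$ gives $\|[(T_n\psi)(fT_n\varphi)]\ast[(T_m\psi)(uT_m\varphi)]\|_{B_3}\lesssim\|fT_n\varphi\|_{B_1}\|uT_m\varphi\|_{B_2}$. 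Summing, $(f\ast u)_{B_3}$ is dominated pointwise by the step-function realization of the discrete convolution of $(\|fT_n\varphi\|_{B_1})_n$ with $(\|uT_m\varphi\|_{B_2})_m$, and the hypothesis $C_1\ast C_2\hookrightarrow C_3$ applied to these step functions yields \eqref{conv0}. The special case \eqref{p2} is \eqref{conv0} with $B_1=\cF L^\infty$, $B_2=B_3=\cF L^p$ (pointwise multiplication $L^\infty\cdot L^p\subseteq L^p$ on the Fourier side) and $C_1=L^1$, $C_2=C_3=L^q$ (Young's inequality). For \textbf{(ii)} no decomposition is needed: $f_{B_2}(x)=\|fT_xg\|_{B_2}\lesssim\|fT_xg\|_{B_1}=f_{B_1}(x)$ pointwise, using the inclusion $B_1\hookrightarrow B_2$ only on the compactly supported element $fT_xg$ (the ``local'' version), and then $\|f\|_{W(B_2,C_2)}=\|f_{B_2}\|_{C_2}\lesssim\|f_{B_1}\|_{C_2}\lesssim\|f_{B_1}\|_{C_1}=\|f\|_{W(B_1,C_1)}$, the last step using $C_1\hookrightarrow C_2$ only on functions of the form $f_{B_1}$ (the ``global'' version). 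Estimate \eqref{lp} is then the combination of $L^{p_1}(Q)\hookrightarrow L^{p_2}(Q)$ for $p_1\ge p_2$ with $\ell^{q_1}\hookrightarrow\ell^{q_2}$ for $q_1\le q_2$.

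Parts \textbf{(iii)} and \textbf{(iv)} I would obtain by transferring the corresponding statements for $E(B,C)$ through the retract, since both complex interpolation and duality commute with bounded retractions. For (iii) the input is the Calder\'on-type identity $[E(B_1,C_1),E(B_2,C_2)]_{[\theta]}=E\bigl([B_1,B_2]_{[\theta]},[C_1,C_2]_{[\theta]}\bigr)$ for vector-valued mixed sequence spaces, which holds precisely when one of the $C_i$ has absolutely continuous norm (it fails for an $L^\infty$ global component) --- hence the hypothesis --- together with $[L^{p_1},L^{p_2}]_{[\theta]}=L^p$, $1/p=(1-\theta)/p_1+\theta/p_2$, and the assumed interpolation identity for the $B_i$. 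For (iv) the input is $E(B,C)'=E(B',C')$, valid once the finitely supported $B$-sequences are dense in $E(B,C)$, i.e. once $\cC_0^\infty$ is dense in both $B$ and $C$; this yields \eqref{duality}. The main obstacle is the first step: proving that $j$ and $\pi$ are bounded in both directions with the stated ranges and constants independent of $B$ and $C$, and that $\pi\circ j=\mathrm{id}$ --- this is where the real content sits, being essentially the proof that $W(B,C)$ is well defined and window-independent. A secondary technical point, in (i), is the bookkeeping that only finitely many of the cubes $n+m+3Q$ meet a given localizing window, so the majorizing discrete convolution has uniformly finite overlap and the constants genuinely do not depend on $x$.
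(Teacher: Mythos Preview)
The paper does not supply its own proof of this lemma: immediately after the statement it reads ``The proof of all these results can be found in (\cite{feichtinger80,feichtinger83,feichtinger90,Heil03}).'' So there is nothing to compare against in the paper itself; your sketch is in fact considerably more detailed than what the authors provide.

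That said, your approach \emph{is} the standard one found in the cited references, particularly Feichtinger's original papers and Heil's survey: realize $W(B,C)$ as a complemented subspace (retract) of a $B$-valued discrete $C$-space via a BUPU, and then pull back convolution, inclusion, interpolation, and duality from the sequence-space side. The argument for (i) via the support bookkeeping on $n+m+3Q$, the direct pointwise estimate for (ii), and the transfer of (iii)--(iv) through the retract are all correct in outline and match Feichtinger's treatment. One small point worth tightening: the map $\pi\colon(b_n)\mapsto\sum_n(T_n\psi)b_n$ only makes sense as written when the $b_n$ are already functions (so that multiplication by $T_n\psi$ is defined); in the abstract $E(B,C)$ the entries are just elements of $B$. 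The usual fix is to restrict $\pi$ to sequences $(b_n)$ with $\mathrm{supp}\,b_n\subset n+2Q$, or equivalently to view $W(B,C)$ as a retract of the subspace of $E(B,C)$ consisting of such localized sequences --- this is harmless since $j$ lands there anyway, but it is the reason the retract is not quite onto all of $E(B,C)$ and why the constants in (iii)--(iv) still behave.
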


\noindent The proof of all these results can be found in
  (\cite{feichtinger80,feichtinger83,feichtinger90,Heil03}).
  
  Finally, let us recall the following lemma \cite[Lemma 6.1]{cordero3}, that will be used in the last Section 5.
  \begin{lemma}
 Let  $1\leq p,q,r\leq\infty$. If
\begin{equation}\label{ind}\frac1p+\frac1q=\frac1{r^\prime},\end{equation}
then
\begin{equation}\label{mconvm}
W(\cF L^{p^\prime},L^p)(\Ren)
\cdot W (\cF
L^{q\prime},L^q)(\Ren)\subset
W(\cF L^r,L^{r^\prime})(\Ren)
\end{equation}
with  norm inequality  $\| f
h \|_{ W(\cF
L^r,L^{r^\prime})}\lesssim
\|f\|_{W(\cF
L^{p^\prime},L^p)}\|h\|_{W
(\cF L^{q^\prime},L^q)}$.
\end{lemma}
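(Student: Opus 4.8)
The plan is to reduce the product estimate \eqref{mconvm} to a convolution estimate on the Fourier side, and then invoke the convolution property of Wiener amalgam spaces, Lemma \ref{WA}(i). The key point is that pointwise multiplication corresponds, under the Fourier transform, to convolution, and that the Wiener amalgam construction $W(\cF L^p, L^q)$ is designed precisely so that this duality is transparent: indeed, $\cF$ maps $W(\cF L^p, L^q)$ to a space of Wiener amalgam type with $L^p$ local and $\cF L^q$ global components, so the problem becomes a convolution in such spaces.

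First I would observe that, since $fh$ has Fourier transform $\hat f \ast \hat h$, the claimed inequality $\|fh\|_{W(\cF L^r, L^{r'})} \lesssim \|f\|_{W(\cF L^{p'},L^p)}\|h\|_{W(\cF L^{q'},L^q)}$ is equivalent, via the (well-known, and used repeatedly in the cited references) behaviour of $\cF$ on these spaces, to showing
\[
W(L^{p'}, \cF L^p)\ast W(L^{q'}, \cF L^q)\hookrightarrow W(L^r, \cF L^{r'}).
\]
By Lemma \ref{WA}(i), it suffices to verify the two ``scalar'' ingredients: the local-component convolution $L^{p'}\ast L^{q'}\hookrightarrow L^r$ and the global-component convolution $\cF L^p \ast \cF L^q\hookrightarrow \cF L^{r'}$. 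For the first, Young's inequality requires $1/p'+1/q' = 1 + 1/r$, i.e. $1/p + 1/q = 1 - 1/r = 1/r'$ rewritten, which is exactly hypothesis \eqref{ind} after the elementary manipulation $1/p'+1/q'-1 = (1-1/p)+(1-1/q)-1 = 1-(1/p+1/q) = 1-1/r' = 1/r$. For the second, $\cF L^p\ast\cF L^q \hookrightarrow \cF L^{r'}$ is, on the Fourier side, the pointwise product estimate $L^p\cdot L^q\hookrightarrow L^{r'}$, which by Hölder holds precisely when $1/p+1/q=1/r'$ — again hypothesis \eqref{ind}. Thus both conditions of Lemma \ref{WA}(i) hold simultaneously under the single index relation \eqref{ind}, and \eqref{conv0} yields the desired embedding; transporting back through $\cF$ gives \eqref{mconvm} with the norm inequality.

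The main obstacle is bookkeeping rather than depth: one must track carefully how $\cF$ intertwines the local and global components of a Wiener amalgam space (roughly, $\cF W(\cF L^a, L^b) = W(L^a, \cF L^b)$ up to identifications), and check that the spaces $W(L^{p'},\cF L^p)$ etc. are indeed ``well defined'' in the sense required by Lemma \ref{WA}, so that part (i) applies — this needs $1\le p,q,r\le\infty$, which is assumed. One should also note the boundary cases: when some exponent equals $1$ or $\infty$ the $\cF L^{p,q}$ Lorentz-type refinements are not available, but the plain $\cF L^p$ statement still goes through because Young's and Hölder's inequalities remain valid at the endpoints. No genuinely new estimate is needed beyond Young, Hölder, and the structural Lemma \ref{WA}; this is why the result is stated as a lemma and its proof is deferred to the literature (\cite{cordero3}).
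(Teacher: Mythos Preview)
The paper does not prove this lemma here; it is quoted from \cite[Lemma~6.1]{cordero3}. Your strategy is correct and is the standard one, but the passage to the Fourier side is slightly more delicate than you indicate: the Fourier image of $W(\cF L^a,L^b)$ is the modulation space $M^{a,b}$, not a space of the form $W(L^a,\cF L^b)$, and the two differ in general by the order of the mixed norm, so Lemma~\ref{WA}(i) as stated (with global component an $L^p$ space) does not apply verbatim on the Fourier side.

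The argument is carried out more cleanly \emph{directly}, via the pointwise-product analogue of Lemma~\ref{WA}(i). Choose a window $g=g_1g_2$ with $g_1,g_2\in\cC_0^\infty$; then
\[
\|(fh)T_xg\|_{\cF L^r}=\|(fT_xg_1)(hT_xg_2)\|_{\cF L^r}\lesssim \|fT_xg_1\|_{\cF L^{p'}}\,\|hT_xg_2\|_{\cF L^{q'}},
\]
since $\cF L^{p'}\cdot\cF L^{q'}\hookrightarrow\cF L^r$ is exactly Young's inequality $L^{p'}\ast L^{q'}\hookrightarrow L^r$ on the Fourier side, which holds under $1/p'+1/q'=1+1/r$, i.e.\ \eqref{ind}. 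Taking the $L^{r'}_x$-norm and applying H\"older's inequality $L^p\cdot L^q\hookrightarrow L^{r'}$ (again \eqref{ind}) finishes the proof. This is precisely your Young/H\"older computation with the local and global estimates kept on the original side, so no identification of Fourier images is needed.
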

  \par

\subsection{Modulation spaces}(\cite{Fei83,grochenig}).
Let $g\in\cS(\rd)$ be a non-zero window function and consider the
so-called short-time Fourier transform (STFT) $V_gf$ of a
function/tempered distribution $f$ with respect to the the window
$g$:
\[
V_g f(x,\o)=\la f, M_{\o}T_xg\ra =\int e^{-2\pi i \o y}f(y)\overline{g(y-x)}\,dy,
\]
i.e.,  the  Fourier transform $\cF$ applied to $f\overline{T_xg}$.\par For $s\in\R$, we consider the weight function $\la
x\ra^{s}=(1+|x|^2)^{s/2}, x\in\rd. $ If $1\leq p, q\leq\infty$, $s\in\R$, the {\it modulation space} $\mathcal{M}^{p,q}_{s}(\R^d)$ is defined as the closure of the Schwartz class with respect to the norm
\[
\|f\|_{\mathcal{M}_{s}^{p,q}}=\left(\intrd\left(\intrd |V_gf(x,\o)|^p dx\right)^{q/p}\la\o\ra^{sq} d\o\right)^{1/q}
\]
(with obvious modifications when $p=\infty$ or $q=\infty$).\par
Among the properties of \modsp s, we record that they are Banach
spaces whose  definition is independent of the choice of the
window $g\in\cS(\rd)$, $\mathcal{M}^{2,2}=L^2$,
$(\mathcal{M}_{s}^{p,q})'=\mathcal{M}_{-s}^{p',q'}$, whenever $p,q<\infty$.\par Another
definition of these spaces uses the unite-cube decomposition of
the frequency space, we address interested readers to
\cite{baoxiang}.\par

\par
Finally we recall the behaviour of modulation spaces with respect to complex interpolation (see \cite[Corollary 2.3]{feichtinger83}).
\begin{proposition}\label{cintm}
Let $1\leq p_1,p_2,q_1,q_2\leq\infty$, with $q_2<\infty$. If $T$ is a linear operator such that, for $i=1,2$,
\[
\|Tf\|_{M^{p_i,q_i}}\leq A_i\|f\|_{M^{p_i,q_i}}\quad \forall f\in M^{p_i,q_i},
\]
then
\[
\|Tf\|_{M^{p,q}}\leq CA_1^{1-\theta}A_2^\theta\|f\|_{M^{p,q}}\quad \forall f\in M^{p,q},
\]
where $1/p=(1-\theta)/p_1+\theta/p_2$, $1/q=(1-\theta)/q_1+\theta/q_2$, $0< \theta<1$ and $C$ is independent of $T$.
\end{proposition}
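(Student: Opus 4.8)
The plan is to reduce the statement to the classical complex interpolation theorem for mixed-norm Lebesgue spaces, by conjugating $T$ with the short-time Fourier transform. Fix a non-zero window $g\in\cS(\Ren)$ normalized so that $\|g\|_{L^2}=1$, so that the STFT orthogonality (inversion) formula reads
\[
f=\int_{\Renn}V_gf(x,\o)\,M_\o T_x g\,dx\,d\o,\qquad f\in\cS'(\Ren),
\]
that is, $V_g^\ast V_g=\mathrm{Id}$, where $V_g^\ast F=\int_{\Renn}F(x,\o)\,M_\o T_x g\,dx\,d\o$ is the synthesis operator associated with $g$. Two standard facts will be used, valid for the Schwartz window $g$ and every $1\le p,q\le\infty$: the analysis operator $V_g$ is bounded from $M^{p,q}(\Ren)$ into $L^{p,q}(\Renn)$ — in fact isometric, by the very definition of the modulation norm — and the synthesis operator $V_g^\ast$ is bounded from $L^{p,q}(\Renn)$ back into $M^{p,q}(\Ren)$. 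In particular every $M^{p,q}$ is a retract of the mixed-norm space $L^{p,q}(\Renn)$, with coretraction $V_g$ and retraction $V_g^\ast$.

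Set $\widetilde T:=V_g\circ T\circ V_g^\ast$. This is a well-defined linear operator on $L^{p_1,q_1}(\Renn)+L^{p_2,q_2}(\Renn)$, since $V_g^\ast$ carries this sum into $M^{p_1,q_1}+M^{p_2,q_2}$, where $T$ is given. For $i=1,2$ and $F\in L^{p_i,q_i}(\Renn)$, chaining the three boundedness properties above gives
\[
\|\widetilde T F\|_{L^{p_i,q_i}}\lesssim\|T(V_g^\ast F)\|_{M^{p_i,q_i}}\le A_i\|V_g^\ast F\|_{M^{p_i,q_i}}\lesssim A_i\|F\|_{L^{p_i,q_i}},
\]
so $\widetilde T$ maps $L^{p_i,q_i}(\Renn)$ into itself with norm $\lesssim A_i$. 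Since $q_2<\infty$, the complex interpolation identity $\big[L^{p_1,q_1}(\Renn),L^{p_2,q_2}(\Renn)\big]_{[\theta]}=L^{p,q}(\Renn)$ holds with the stated relations among the indices (the iterated, vector-valued form of the Riesz--Thorin theorem; see \cite{bergh-lofstrom}), whence the complex interpolation theorem for linear operators yields
\[
\|\widetilde T F\|_{L^{p,q}}\le C\,A_1^{1-\theta}A_2^\theta\,\|F\|_{L^{p,q}},\qquad F\in L^{p,q}(\Renn),
\]
with $C$ depending only on the indices.

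It remains to transport the bound to $M^{p,q}$. For $f\in M^{p,q}(\Ren)$, two applications of $V_g^\ast V_g=\mathrm{Id}$ give
\[
Tf=V_g^\ast V_g(Tf)=V_g^\ast\big(V_gTV_g^\ast(V_gf)\big)=V_g^\ast\big(\widetilde T(V_gf)\big),
\]
whence, using once more the boundedness of $V_g^\ast$ on $L^{p,q}$ and of $V_g$ on $M^{p,q}$,
\[
\|Tf\|_{M^{p,q}}\lesssim\|\widetilde T(V_gf)\|_{L^{p,q}}\le C\,A_1^{1-\theta}A_2^\theta\,\|V_gf\|_{L^{p,q}}\lesssim C\,A_1^{1-\theta}A_2^\theta\,\|f\|_{M^{p,q}},
\]
and the constant is independent of $T$, which is the claim. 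Equivalently, one can argue abstractly: retracts commute with the complex interpolation functor, hence $\big[M^{p_1,q_1},M^{p_2,q_2}\big]_{[\theta]}=M^{p,q}$, and the conclusion is then the standard interpolation theorem for operators; the conjugation above is just an explicit unwinding of this.

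The one genuinely delicate ingredient is the interpolation identity for the mixed-norm Lebesgue spaces on $\Renn=\Ren\times\Ren$ together with the precise role of the hypothesis $q_2<\infty$: it is exactly this finiteness that makes the Calder\'on complex method reproduce the expected mixed-norm space (it provides the absolute continuity of the outer $L^{q_2}$-norm required in the construction, while the density of $\cS$ disposes of the remaining requirements). Everything else — the inversion formula and the mapping properties of $V_g$ and $V_g^\ast$ — is classical for Schwartz windows, and the weighted spaces $M^{p,q}_s$ are handled by the identical argument after inserting the weight $\la\o\ra^s$ into all the Lebesgue spaces involved.
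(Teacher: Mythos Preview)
The paper does not actually prove this proposition: it is merely recalled, with a citation to \cite[Corollary 2.3]{feichtinger83}, and no argument is given in the text. Your retract argument via the STFT is correct and is precisely the standard route to the result: $M^{p,q}$ is a retract of the mixed-norm space $L^{p,q}(\Renn)$ through the analysis/synthesis pair $(V_g,V_g^\ast)$, so complex interpolation for mixed-norm Lebesgue spaces (where the hypothesis $q_2<\infty$ is used) transfers directly. This is essentially the mechanism behind Feichtinger's original result, which is phrased in the Wiener amalgam language (cf.\ Lemma~\ref{WA}(iii) in the paper, combined with the identification of $M^{p,q}$ with $\cF W(\cF L^p,L^q)$); your version simply unwinds it explicitly through the STFT rather than invoking the amalgam interpolation lemma.
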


We observe that definition and properties  of modulation spaces
refer to the case  $p,q\geqslant 1$. For the quasi-Banach case
$0<p,q<1$ see, e.g., \cite{benyi3, baoxiang, baoxiang2}.

\subsection{$T^*T$ method}\cite{GinibreVelo92, GinibreVelo}
The $T^*T$ method is an abstract tool of Harmonic Analysis, discovered by Tomas in 1975. This method allows to know the continuity of a linear operator $T$ (and thus  of its adjoint $T^*$), simply by the boundedness of the composition  operator $T^*T$.

For any vector space $D$, we denote by $D_a^*$ its algebraic dual, by  $\mathcal{L}_a(D,X)$ the space of linear maps from $D$ to some other vector space $X$, and by $\left\langle \f,f\right\rangle_D$ the pairing between
$D_a^*$ and $D$ ($f\in D$, $\f\in D_a^*$), taken to be linear in $f$  and antilinear in $\f$.
\begin{lemma}\label{lemt*t}
Let $\mathcal{H}$ be a Hilbert space, $X$ a Banach space, $X^*$ the dual of $X$, and $D$ a vector space densely contained in $X$. Let $T\in \mathcal{L}_a(D,\mathcal{H})$ and $T^*\in \mathcal{L}_a(\mathcal{H},D_a^*)$ be its adjoint, defined by
\[
\left\langle T^*h,f\right\rangle_D=\left\langle h,Tf\right\rangle, \qquad\forall f\in D,\quad\forall h\in \mathcal{H},
\]
where $\left\langle ,\right\rangle$ is the  inner product in
$\mathcal{H}$ (antilinear in the first argument). Then the
following three conditions are equivalent.
\\
(1) There exists $a, 0\leqslant a<\infty$ such that for all $f\in D$
\begin{equation}\label{le1}
\|Tf\|_{\mathcal{H}}\leqslant a\|f\|_X;
\end{equation}
(2) Let $h\in \mathcal{H}$. Then $T^*h$ can be extended to a continuous linear functional on $X$, and there exists  $a, 0\leqslant a<\infty$, such that for all $h\in \mathcal{H}$
\begin{equation}\label{le2}
\|T^*h\|_{X^*}\leqslant a\|h\|_{\mathcal{H}}.
\end{equation}
(3) Let $f\in X$. Then $T^*Tf$ can be extended to a continuous linear functional on X, and there exists $a, 0\leqslant a<\infty$, such that for all $f\in D$,
\begin{equation}\label{le3}
\|T^*Tf\|_{X^*}\leqslant a^2\|f\|_X.
\end{equation}
The constant $a$ is the same in all the three cases. If  one of (all) those conditions is (are) satisfied,  the operators $T$
 and $T^*T$  extend by continuity to bounded operators from $X$ to $\mathcal{H}$ and from $X$ to $X^*$, respectively.
\begin{proof}
 From the fact that $D$ is densely contained in $X$, it follows that $X^*$ is a subspace of $D_a^*$.\\
$(1)\Rightarrow(2)$. Let $h\in \mathcal{H}$. Then, for all $f\in D$
\[
|\left\langle T^*h,f \right\rangle_D| = |\left\langle h,Tf\right\rangle| \leqslant ||h||_{\mathcal{H}}\, ||Tf||_{\mathcal{H}} \leqslant a\,||h||_{\mathcal{H}}\,||f||_X.
\]
$(2)\Rightarrow(1)$. Let $f\in D$. Then, for all $h\in \mathcal{H}$
\[
|\left\langle h,Tf \right\rangle| = |\left\langle T^*h,f\right\rangle_D| \leqslant ||T^*h||_{X^*}\, ||f||_X \leqslant a\,||h||_{\mathcal{H}}\,||f||_X
\]
Clearly $(1)$ and $(2)$ imply $(3)$, and therefore $(1)$ or $(2)$ implies $(3)$.\\
$(3)\Rightarrow(1)$. Let $f\in D$. Then
\[
||Tf||^2 = |\left\langle Tf,Tf \right\rangle| = |\left\langle T^*Tf,f\right\rangle_D| \leqslant ||T^*Tf||_{X^*}\, ||f||_X \leqslant a^2\,||f||_X^2.
\]
Since $D$ is a dense subspace of $X$, we see that $T$ can be extended to a bounded linear functional from $X$ to $\mathcal{H}$.
\end{proof}
\end{lemma}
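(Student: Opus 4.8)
The plan is to prove the equivalence via the chain $(1)\Rightarrow(2)\Rightarrow(1)$, together with the easy observation that $(3)$ follows from $(1)$ and $(2)$, and the return implication $(3)\Rightarrow(1)$; each step will be arranged so as to carry the constant over verbatim, which is what yields the final assertion that the same $a$ works in all three conditions. First I would record the preliminary fact, already noted in the statement, that since $D$ is dense in $X$ a bounded linear functional on $X$ is determined by its restriction to $D$, so $X^{*}\subset D_{a}^{*}$; it is this inclusion that gives meaning to the clauses ``$T^{*}h$ can be extended to a continuous linear functional on $X$'' in \eqref{le2} and ``$T^{*}Tf$ can be extended $\ldots$'' in \eqref{le3}: what has to be shown there is that the element $\langle T^{*}h,\cdot\rangle_{D}$ (resp.\ $\langle T^{*}Tf,\cdot\rangle_{D}$) of $D_{a}^{*}$ is bounded for the norm of $X$, and then it admits a unique extension lying in $X^{*}$.

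For $(1)\Rightarrow(2)$ I would fix $h\in\mathcal{H}$ and estimate, for $f\in D$, using the defining relation of $T^{*}$ and Cauchy--Schwarz in $\mathcal{H}$,
\[
\bigl|\langle T^{*}h,f\rangle_{D}\bigr|=\bigl|\langle h,Tf\rangle\bigr|\le\|h\|_{\mathcal{H}}\,\|Tf\|_{\mathcal{H}}\le a\,\|h\|_{\mathcal{H}}\,\|f\|_{X},
\]
so that $\langle T^{*}h,\cdot\rangle_{D}$ extends to an element of $X^{*}$ of norm at most $a\|h\|_{\mathcal{H}}$, which is \eqref{le2}. For $(2)\Rightarrow(1)$ I would argue by duality: for $f\in D$,
\[
\|Tf\|_{\mathcal{H}}=\sup_{\|h\|_{\mathcal{H}}\le1}\bigl|\langle h,Tf\rangle\bigr|=\sup_{\|h\|_{\mathcal{H}}\le1}\bigl|\langle T^{*}h,f\rangle_{D}\bigr|\le\sup_{\|h\|_{\mathcal{H}}\le1}\|T^{*}h\|_{X^{*}}\,\|f\|_{X}\le a\,\|f\|_{X}.
\]

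That $(1)$ and $(2)$ together imply $(3)$ is then immediate: applying \eqref{le2} with $h=Tf$ and then \eqref{le1} shows that $T^{*}Tf$ extends to $X^{*}$ with $\|T^{*}Tf\|_{X^{*}}\le a\|Tf\|_{\mathcal{H}}\le a^{2}\|f\|_{X}$. For the converse $(3)\Rightarrow(1)$ I would write, for $f\in D$,
\[
\|Tf\|_{\mathcal{H}}^{2}=\langle Tf,Tf\rangle=\langle T^{*}Tf,f\rangle_{D}\le\|T^{*}Tf\|_{X^{*}}\,\|f\|_{X}\le a^{2}\,\|f\|_{X}^{2},
\]
using the definition of $T^{*}$ with $h=Tf$ in the second equality and, in the first inequality, that the $X^{*}$-extension of $T^{*}Tf$ restricts to $\langle T^{*}Tf,\cdot\rangle_{D}$ on $D\subset X$; taking square roots gives \eqref{le1}. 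Finally, once \eqref{le1} holds, $T$ is bounded on the dense subspace $D$ of $X$ with values in the complete space $\mathcal{H}$, so it extends uniquely to a bounded operator $X\to\mathcal{H}$ of the same norm, and likewise $T^{*}T$, bounded from $D$ into $X^{*}$, extends to a bounded operator $X\to X^{*}$.

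I do not expect a genuine obstacle: each implication is a single Cauchy--Schwarz or duality line. The only points needing care are the bookkeeping of the constant --- making sure it comes out as exactly $a$ (and $a^{2}$ for the composition $T^{*}T$), which is why the implications must be routed as above and not through cruder comparisons --- and keeping the antilinearity conventions for $\langle\cdot,\cdot\rangle$ and $\langle\cdot,\cdot\rangle_{D}$ consistent, so that the identity $\langle T^{*}Tf,f\rangle_{D}=\|Tf\|_{\mathcal{H}}^{2}$ used for $(3)\Rightarrow(1)$ is legitimate.
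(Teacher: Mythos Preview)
Your proposal is correct and follows essentially the same route as the paper: the same chain $(1)\Leftrightarrow(2)$, $(1)\wedge(2)\Rightarrow(3)$, $(3)\Rightarrow(1)$, each step carried out by the identical Cauchy--Schwarz or duality pairing line. The only cosmetic difference is that for $(2)\Rightarrow(1)$ you phrase the estimate as a supremum over the unit ball of $\mathcal{H}$, whereas the paper writes the inequality for arbitrary $h$ and implicitly takes the supremum.
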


The following corollary is extremely  useful.


\begin{corollary}\label{cort*t}
Let $\mathcal{H,\, D}$ and two triplets $(X_i,T_i, a_i),\,i=1,2$, satisfy the  conditions of Lemma
\ref{lemt*t}. Then for all choices of $i,\,j=1,2,$  $\mathcal{R}(T_i^*T_j)\subset X_i^*$ and for all $f\in D$,
\begin{equation}\label{123}
\|T_i^*T_j f\|_{X_i^*}\leqslant a_ia_j \|f\|_{X_j}.
\end{equation}
In particular, $T_i^*T_j$ extends by continuity to a bounded operator from $X_j$ to $X^*_i$, and \eqref{123} holds for all $f\in X_j$.
\end{corollary}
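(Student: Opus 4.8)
The plan is to obtain the corollary by simply chaining the two halves of Lemma \ref{lemt*t}, one for each index. First I would fix $i,j\in\{1,2\}$ and take $f\in D$; since the triplet $(X_j,T_j,a_j)$ satisfies the hypotheses, direction (1) of Lemma \ref{lemt*t} tells me that $T_jf\in\mathcal{H}$ with $\|T_jf\|_{\mathcal{H}}\leq a_j\|f\|_{X_j}$. Then I would feed $h=T_jf$ into direction (2) of the lemma applied to the \emph{other} triplet $(X_i,T_i,a_i)$: this gives that $T_i^*h$ extends to a continuous linear functional on $X_i$, i.e.\ $T_i^*T_jf\in X_i^*$, with $\|T_i^*T_jf\|_{X_i^*}\leq a_i\|T_jf\|_{\mathcal{H}}$. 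Concatenating the two bounds yields
\[
\|T_i^*T_jf\|_{X_i^*}\leq a_i\|T_jf\|_{\mathcal{H}}\leq a_ia_j\|f\|_{X_j},
\]
which is exactly \eqref{123}, and in particular $\mathcal{R}(T_i^*T_j)\subset X_i^*$.

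For the last assertion I would invoke that $D$ is densely contained in $X_j$ (part of the standing hypotheses, via Lemma \ref{lemt*t} applied to the triplet with index $j$). The estimate above shows that $T_i^*T_j\colon D\to X_i^*$ is bounded for the $X_j$-norm on $D$; since $X_i^*$ is complete, it extends uniquely by continuity to a bounded operator $X_j\to X_i^*$, and \eqref{123} survives the limit, hence holds for every $f\in X_j$.

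I do not anticipate a genuine obstacle: the statement is a bookkeeping combination of directions (1) and (2) of Lemma \ref{lemt*t}. The one point that needs attention is keeping track of which triplet plays which role — direction (1) is used for the ``inner'' operator $T_j$ and direction (2) for the ``outer'' operator $T_i$ — and correspondingly using density of $D$ in $X_j$, not in $X_i$, for the continuous extension. Taking $i=j$ recovers statement (3) of the lemma, so the corollary is a mild generalization rather than a new phenomenon.
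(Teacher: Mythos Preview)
Your argument is correct and is exactly the natural one: apply direction (1) of Lemma~\ref{lemt*t} to the inner operator $T_j$ and direction (2) to the outer operator $T_i^*$, then extend by density of $D$ in $X_j$. The paper does not spell out a proof of this corollary (it is stated as an immediate consequence of Lemma~\ref{lemt*t}), so your write-up is precisely the intended justification.
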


Ginibre and Velo \cite{GinibreVelo92} applied Lemma \ref{lemt*t} and Corollary \ref{cort*t} to the bounded operator $T: L^1(I,\mathcal{H})\rightarrow \mathcal{H}$, defined  by
\begin{equation}\label{eqdefT}
Tf=\int_I U(-t)f(t)dt,
\end{equation}
where $I$ is an interval of $\R$ (possibly $\R$ itself) and  $U$ a
unitary strongly continuous one parameter group in $\mathcal{H}$.
Then its adjoint $T^*$ is the operator
\[
T^*h(t)=U(t)h
\]
from $\mathcal{H}$ to $L^\infty(I,\mathcal{H})$, where the duality is defined by the scalar products in $\mathcal{H}$ and in $L^2(I,\mathcal{H})$, such that $T^*T$ is the bounded operator from $L^1(I,\mathcal{H})$ to $L^\infty(I,\mathcal{H})$ given by
\[
T^*Tf=\int_I U(t-t')f(t')dt'.
\]
Clearly the conditions of Lemma \ref{lemt*t} are satisfied with $X=L^1(I,\mathcal{H})$, the operator $T$ defined in \eqref{eqdefT},
the constant $a=1$, and $\mathcal{D}$ any dense subspace of $X$.

Let us introduce  the retarded operator $(T^*T)_R$, defined by
\[
(T^*T)_R f(t)=(U_R *_t f)(t)=\int_I U_R(t-t')f(t')dt'
\]
where $U_R(t)=\chi_{+}(t)U(t):=\chi_{[0,\infty)}(t)U(t)$.

We recall that a space $X$ of distributions in space-time is
 said to be \emph{time cut-off stable}  if the multiplication by the characteristic function $\chi_J$, of an interval $J$ in time, is a
 bounded operator in $X$ with norm uniformly bounded with respect to $J$. The spaces under our consideration are  of the type
 $X=L_t^q(I,Y)$,
 where $Y$ is a space of distribution in the space variable and for which that property obviously holds.

\begin{lemma}\label{lemmaoptronc}
Let $\mathcal{H}$ an Hilbert space, let $I$ be an interval of
$\R$, let $X\subset\mathcal{S}'(I\times \R^d)$ be a Banach space,
let $X$ be time cut-off stable, and let the conditions of Lemma
\ref{lemt*t}  hold for the operator $T$ defined in
 \eqref{eqdefT}. Then the operator $(T^*T)_R$ is (strictly speaking extends to) a bounded operator from $L_t^1(I,\mathcal{H})$ to $X^*$
 and from  $X$ to $L_t^\infty(I,\mathcal{H})$.
\begin{proof}
We recall the proof for sake of clarity.  It is enough to demonstrate the second property, from which the first one follows by duality.
 Let $f\in D$. Then, for each $t$
\begin{align*}
\|(T^*T)_R f(t)\|_\mathcal{H}&=\|T\chi_+ (t-\cdot)f\|_\mathcal{H}\leqslant a\sup_t\{\|\chi_+(t-\cdot)\|_{\mathcal{B}(X)}\}\|f\|_X\\
&\leq C a \|f\|_X,
\end{align*}
by the unitary of $U$, the estimate \eqref{le1} of Lemma \ref{lemt*t}, and the time cut-off stability of $X$.
\end{proof}
\end{lemma}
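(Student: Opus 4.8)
The plan is to establish first the second mapping property, $(T^*T)_R\colon X\to L_t^\infty(I,\mathcal{H})$, directly from the hypotheses, and then to deduce the first one, $(T^*T)_R\colon L_t^1(I,\mathcal{H})\to X^*$, by duality. Along the way I will need the companion estimate for the \emph{advanced} operator
\[
(T^*T)_A f(t)=\int_I\chi_{(-\infty,0]}(t-t')\,U(t-t')f(t')\,dt',
\]
namely $(T^*T)_A\colon X\to L_t^\infty(I,\mathcal{H})$, proved by an identical argument. The single algebraic observation that drives everything is that, since $U$ is a one-parameter group, $U(t-t')=U(t)U(-t')$, so pulling the unitary $U(t)$ out of the Bochner integral yields
\[
(T^*T)_R f(t)=U(t)\int_I U(-t')\,\bigl(\chi_+(t-\cdot)f\bigr)(t')\,dt'=U(t)\,T\bigl(\chi_+(t-\cdot)f\bigr),
\]
where $\chi_+(t-\cdot)$ stands for multiplication by the characteristic function of $(-\infty,t]\cap I$ and the last equality is just the definition \eqref{eqdefT} of $T$. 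Thus one application of the retarded propagator is a time cut-off, followed by the operator $T$, composed with a unitary — and $T$ is already controlled by Lemma \ref{lemt*t}.

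With the identity in hand, the first estimate is then immediate. Using that $U(t)$ is unitary on $\mathcal{H}$, that $T$ extends to a bounded operator $X\to\mathcal{H}$ of norm $a$ by the estimate \eqref{le1} of Lemma \ref{lemt*t}, and finally the time cut-off stability of $X$ — multiplication by $\chi_{(-\infty,t]\cap I}$ is bounded on $X$ with a constant $C$ independent of $t$, since all these sets are subintervals of $I$ — one obtains, for every $t$,
\[
\|(T^*T)_R f(t)\|_{\mathcal{H}}=\|T(\chi_+(t-\cdot)f)\|_{\mathcal{H}}\leqslant a\,\|\chi_+(t-\cdot)f\|_X\leqslant Ca\,\|f\|_X.
\]
Taking the supremum over $t\in I$ gives $\|(T^*T)_R f\|_{L_t^\infty(I,\mathcal{H})}\leqslant Ca\,\|f\|_X$, first for $f$ in the dense subspace $D$ and then, by continuity, for all $f\in X$. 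Repeating the computation with $\chi_{(-\infty,0]}$ in place of $\chi_{[0,\infty)}$ — so that the cut-off is now by $\chi_{[t,\infty)\cap I}$, still a subinterval — produces the same bound for $(T^*T)_A$.

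For the second half I would dualize. Given $h$ in a dense subspace of $L_t^1(I,\mathcal{H})$ and $f\in D$, Fubini's theorem together with the identity $U_R(t-t')^{*}=\chi_{[0,\infty)}(t-t')\,U(t'-t)$ gives
\[
\langle (T^*T)_R h,f\rangle=\int_I\!\!\int_I\bigl\langle U_R(t-t')h(t'),f(t)\bigr\rangle_{\mathcal{H}}\,dt\,dt'=\int_I\bigl\langle h(t'),(T^*T)_A f(t')\bigr\rangle_{\mathcal{H}}\,dt',
\]
hence
\[
|\langle (T^*T)_R h,f\rangle|\leqslant\|h\|_{L_t^1(I,\mathcal{H})}\,\|(T^*T)_A f\|_{L_t^\infty(I,\mathcal{H})}\leqslant Ca\,\|h\|_{L_t^1(I,\mathcal{H})}\,\|f\|_X.
\]
Since $D$ is dense in $X$, this shows that $(T^*T)_R h$ extends to an element of $X^*$ with $\|(T^*T)_R h\|_{X^*}\leqslant Ca\,\|h\|_{L_t^1(I,\mathcal{H})}$; a final density argument in $L_t^1(I,\mathcal{H})$ completes the proof.

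I do not anticipate a real analytic obstacle: the two genuine inputs — the group law, which rewrites the propagator in terms of the already-bounded operator $T$, and the uniform time cut-off stability of $X$ — carry the argument. The only points requiring care are bookkeeping: keeping track of the densely defined pairings among $D$, $X$ and $X^*$, and, in the duality step, observing that $(T^*T)_R$ is \emph{not} self-adjoint, its adjoint being the advanced operator $(T^*T)_A$ — which is exactly why the companion $X\to L_t^\infty(I,\mathcal{H})$ estimate for $(T^*T)_A$, rather than for $(T^*T)_R$, is the one actually needed.
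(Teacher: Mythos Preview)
Your proof is correct and follows essentially the same route as the paper: rewrite $(T^*T)_R f(t)$ as $U(t)\,T(\chi_+(t-\cdot)f)$, invoke unitarity of $U$, the bound \eqref{le1}, and time cut-off stability to get the $X\to L_t^\infty(I,\mathcal{H})$ estimate, then dualize. Your treatment is in fact more careful than the paper's one-line ``follows by duality'', since you correctly note that the formal adjoint of $(T^*T)_R$ is the advanced operator $(T^*T)_A$ and supply the matching estimate for it.
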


\section{Fixed time estimates}

In this section we study estimates for the solution $u(t,x)$ to
the Cauchy problem \eqref{cp}, for fixed $t$. Since multiplication
on the Fourier transform side intertwines with convolution on the
space side, formula \eqref{scflow} can be rewritten as
\begin{equation}\label{sol}
u(t,x)=(K_t\ast u_0)(x),
\end{equation}
where $K_t$ is the inverse Fourier transform of the multiplier
$e^{-4\pi^2 i t |\xi|^2}$, given by
\begin{equation}\label{chirp0}
K_t(x)=\frac{1}{(4\pi i t)^{d/2}}e^{i|x|^2/(4t)}.
\end{equation}

First, we establish the estimates for Lebesgue spaces. Since
$e^{it\Delta}$ is a unitary operator, we obtain the \emph{$L^2$
conservation law}
\begin{equation}\label{l2}
\|e^{it\Delta}u_0\|_{L^2(\bR^d)}=\|u_0\|_ {L^2(\bR^d)}.
\end{equation}
Furthermore, since $K_t\in L^\infty$ with $\|K_t\|_{\infty}\asymp
t^{-d/2}$, applying Young inequality to the fundamental solution
\eqref{sol}  we obtain the \emph{$L^1$ dispersive estimate}
\begin{equation}\label{disp}
\|e^{it\Delta}u_0\|_{L^\infty(\bR^d)}\lesssim
|t|^{-d/2}\|u_0\|_{L^1(\bR^d)}.
\end{equation}
This shows that if the initial data $u_0$ has a suitable
integrability in space, then the evolution will have a power-type
decay in time.  Using the Riesz-Thorin theorem (see, e.g.,
\cite{steinweiss}), we can interpolate \eqref{l2} and \eqref{disp}
  to obtain the important \emph{$L^p$ fixed time estimates}
\begin{equation}\label{displp}
\|e^{it\Delta}u_0\|_{L^{r}(\bR^d)}\lesssim
|t|^{-d\left(\frac{1}{2}-\frac{1}{r}\right)}\|u_0\|_{L^{r^\prime}(\bR^d)}
\end{equation}
for all $2\leqslant r\leqslant\infty$, with $1/r+1/r'=1$. These
estimates represent  the complete range of $L^p$ to $L^q$ fixed
time estimates available. In this setting,  the necessary
conditions are usually obtained by scaling conditions (see, for
example, \cite[Exercise 2.35]{tao}, and \cite{nicola} for the
interpretation in terms of Gaussian curvature of the
characteristic manifold). The following proposition (\cite[page
45]{zucco}) is an example of this technique  in the case $p=q'$.

\begin{proposition}
Let $1\leqslant r\leqslant\infty$ and $\alpha\in \R$ such that
\begin{equation}\label{dispnecess}
\|e^{it\Delta}u_0\|_{L^r(\R^d)}\leqslant
Ct^{\alpha}\|u_0\|_{L^{r'}(\R^d)},
\end{equation}
for all $u_0\in S(\R^d),\, t\neq 0$ and some $C$ independent of
$t$ and $u_0$. Then $\alpha=-d(\frac{1}{2}-\frac{1}{r}),\,
r'\leqslant r$ (and thus $2\leqslant r\leqslant \infty$).
\end{proposition}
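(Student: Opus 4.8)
The plan is to extract the exponent $\alpha$ and the constraint on $r$ from two independent invariances of the Schr\"odinger propagator: the parabolic scaling symmetry and the $L^2$ conservation law. Fix a nonzero $u_0\in\mathcal S(\R^d)$ and, for $\lambda>0$, set $u_0^\lambda(x)=u_0(\lambda x)$. A direct computation from \eqref{scflow} (or from \eqref{sol}--\eqref{chirp0} together with the scaling of $K_t$) shows that $\bigl(e^{it\Delta}u_0^\lambda\bigr)(x)=\bigl(e^{i\lambda^2 t\Delta}u_0\bigr)(\lambda x)$. Plugging $u_0^\lambda$ into \eqref{dispnecess} and using the homogeneity of the Lebesgue norms, $\|f(\lambda\cdot)\|_{L^p(\R^d)}=\lambda^{-d/p}\|f\|_{L^p(\R^d)}$, the factors $\lambda^{-d/r}$ on the left and $\lambda^{-d/r'}$ on the right combine with the $t\mapsto\lambda^2 t$ in the time factor to force
\[
-\frac dr\;=\;2\alpha-\frac d{r'}\,,
\]
which, since $1/r+1/r'=1$, is exactly $\alpha=-d\bigl(\tfrac12-\tfrac1r\bigr)$. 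So the value of $\alpha$ drops out of scaling alone, with no restriction on $r$ yet.

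The second step is to pin down $r'\le r$. Here I would feed a one-parameter family of initial data into \eqref{dispnecess} and let a parameter degenerate so that $t$ effectively ranges over a long interval while the spatial profile stays controlled. Concretely, one can use the Galilean/modulation invariance: replacing $u_0$ by $M_\xi u_0=e^{2\pi i\xi\cdot x}u_0$ and letting $|\xi|\to\infty$, the solution $e^{it\Delta}(M_\xi u_0)$ is, up to a modulation and a spatial translation by $\sim t\xi$, just $e^{it\Delta}u_0$; since the $L^r$ norms are translation- and modulation-invariant, this shows the estimate \eqref{dispnecess} with a fixed profile must hold robustly. Then one exploits the fact that, as $t\to\infty$, $e^{it\Delta}u_0\sim (4\pi i t)^{-d/2} e^{i|x|^2/(4t)}\widehat{u_0}(x/(4\pi t))$ (stationary phase / the known asymptotics of the free propagator): the left-hand side of \eqref{dispnecess} behaves like $t^{-d/2}\cdot t^{d/r}\,\|\widehat{u_0}\|_{L^r}$, i.e. like $t^{-d(1/2-1/r)}$ times a constant, and comparing with $t^\alpha$ on the right is consistent; to get the sign of $1/2-1/r$ one instead lets $t\to 0$ and tests against data whose Fourier transform is spread out, where the dispersive estimate can only help if the local integrability on the right ($r'$) is stronger than that on the left ($r$), forcing $r'\le r$. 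Equivalently, one can run the scaling argument of the first step but now with $0<\lambda<1$ and $0<\lambda>1$ separately and combine with the already-known $\alpha$ to see that the inequality can survive $\lambda\to 0$ and $\lambda\to\infty$ only if the exponent of $\lambda$ that multiplies it is zero — which it is by the first step — so the genuine obstruction to $r'>r$ comes from a non-scaling family.

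The cleanest rigorous route for the $r'\le r$ half is probably a duality/self-improvement argument: if \eqref{dispnecess} holds with this $\alpha$, then by the $T^*T$-type reasoning (Lemma \ref{lemt*t} and Corollary \ref{cort*t}) one gets the corresponding homogeneous Strichartz-type bound, and by composing the propagator at two times $e^{i(t-s)\Delta}=e^{it\Delta}e^{-is\Delta}$ and using the group property together with \eqref{l2}, one sees that $r=2$ is the borderline and that the map $L^{r'}\to L^r$ for $r<2$ would contradict the known fact that $e^{it\Delta}$ is unitary on $L^2$ and bounded on no other single $L^p$; more elementarily, testing \eqref{dispnecess} with $u_0$ an approximate identity (so $\widehat{u_0}\approx 1$ on a large ball) at a fixed $t$ gives $\|e^{it\Delta}u_0\|_{L^r}\gtrsim \|K_t\|_{L^r}$ on that ball, which is infinite when $r<2$ unless the $L^{r'}$ norm on the right already controls it — and it cannot, because $\|u_0\|_{L^{r'}}\to\infty$ slower than the left side when $r'<r$ is violated. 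I expect the main obstacle to be making this last comparison fully rigorous without circularity: one must choose the test family carefully so that both sides are finite and the forced inequality on $1/r-1/r'$ emerges with the correct sign, rather than merely re-deriving $\alpha$. Handling the endpoint cases $r=1,\infty$ (where one of the Lebesgue norms degenerates) and checking that $r'\le r$ is equivalent to $2\le r\le\infty$ under $1/r+1/r'=1$ is then routine.
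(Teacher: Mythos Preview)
Your scaling argument for $\alpha$ is correct and matches the paper's proof exactly. The gap is in the second half, where you need $r'\le r$. None of your four sketches is actually carried out, and you yourself flag the difficulty of making the comparison rigorous ``without circularity''. The Galilean step yields nothing (as you observe, the $L^r$ norms on both sides are modulation- and translation-invariant, so the inequality is unchanged); the large-time asymptotic, once the $t$-powers cancel against the already-determined $\alpha$, only reproduces $\|\widehat{u_0}\|_{L^r}\lesssim\|u_0\|_{L^{r'}}$, i.e.\ Hausdorff--Young, whose failure for $r<2$ still has to be established by some independent argument; and the $T^\ast T$ and approximate-identity remarks are too vague to force the sign of $1/r-1/r'$.

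The missing idea is much simpler than any of these. For each fixed $t\neq 0$ the operator $e^{it\Delta}$ commutes with translations (it is a Fourier multiplier), so boundedness $L^{r'}(\R^d)\to L^r(\R^d)$ immediately gives $r'\le r$ by H\"ormander's theorem on translation-invariant operators \cite{hormander}: any nonzero bounded translation-invariant operator from $L^p$ to $L^q$ must satisfy $p\le q$ (superpose $N$ far-separated translates of a fixed function and compare the growth $N^{1/p}$ on the source side with $N^{1/q}$ on the target side). This is the paper's one-line argument and replaces all four of your sketches.
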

\begin{proof}
We can rescale the initial data $u_0$ by a factor $\lambda$ and
use \eqref{dispnecess} for
\[
v(x):=u_0(\lambda x), \qquad \lambda>0,\quad u_0\in\cS(\rd).
\]
The corresponding solution with $v(x)$ as initial data is $
u(\lambda^2t,\lambda x) $, where $u(t,x)=e^{it\Delta}u_0$.
Therefore, by \eqref{dispnecess} and the scaling property
\[
\|f(\lambda \cdot)\|_r=\lambda^{-d/r}\|f(\cdot)\|_r
\]
one has
\[
\lambda^{-d/r}\|u(\lambda^2t,\cdot)\|_{L^{r}(\R^d)}\leqslant Ct^{\alpha}\lambda^{-d/r'}\|u_0\|_{L^{r'}(\R^d)},
\]
for all $\lambda>0,\, t\neq 0$ and $u_0\in S(\R^d)$. Choosing
$t=\lambda^{-2}$, we obtain
\[
\|u(1,\cdot)\|_{L^{r}(\R^d)}\leqslant
C\lambda^{-2\alpha-\frac{d}{r'}+\frac{d}{r}}\|u_0\|_{L^{r'}(\R^d)},
\]
 for all $\lambda>0$ and $u_0\in S(\R^d)$. Since $\|u(1,\cdot)\|_{L^{r}(\R^d)}$ and $\|u_0\|_{L^{r'}(\R^d)}$ are two positive constants,
 we have
\[
\begin{split}
\text{for }\,\,\lambda\to\infty,\quad &\quad-2\alpha-\frac{d}{r'}+\frac{d}{r}\geqslant 0,\\
\text{for }\,\,\lambda\to 0,\quad
&\quad-2\alpha-\frac{d}{r'}+\frac{d}{r}\leqslant 0
\end{split}
\]
and then we obtain the necessary condition for $\alpha$. Moreover,
since $e^{it\Delta}$ is invariant under translation, by
\cite[Theorem 1.1]{hormander} we obtain $r'\leqslant r$, i.e.,
$2\leqslant r\leqslant \infty$. By standard density argument we
attain the desired result.
\end{proof}

For $s\in\R$, consider the Fourier multiplier $\la \Delta\ra^s$,
defined by $\la \Delta\ra^s f=\cF^{-1}(\la\cdot\ra^s\hat{f})$.
Then, from \eqref{displp} and the commutativity property of
Fourier multipliers, one immediately  obtains the \emph{$W^{s,r}$
fixed time estimates}
\begin{equation}
\|e^{it\Delta}u_0\|_{W^{s,r}(\bR^d)}\lesssim
|t|^{-d\left(\frac{1}{2}-\frac{1}{r}\right)}\|u_0\|_{W^{s,r^\prime}(\bR^d)}
\end{equation}
for all $s\in \R,\, 2\leqslant r\leqslant\infty$, $1/r+1/r'=1$.
Finally, we note that  the conservation law \eqref{l2} can be
rephrased in this setting as the \emph{$H^s$ conservation law}
\begin{equation}\label{hs2}
\|e^{it\Delta}u_0\|_{H^s(\bR^d)}=\|u_0\|_ {H^s(\bR^d)}.
\end{equation}
The Schr\"odinger propagator does not preserve any $W^{s,r}$ norm
other than the $H^{s}$ norm.

Now, we focus on Wiener amalgam spaces. $K_t$ in
\eqref{chirp0} lives in $W(\cF L^1, L^\infty)\subset L^\infty$, see \cite{benyi,cordero,baoxiang}.
This is the finest Wiener amalgam space-norm for $K_t$ which,
consequently, gives the worst behavior in the time variable. It is
also possible to improve the latter, at the expense of a rougher
$x$-norm, see \cite{cordero3}.  Indeed, since $K_t\in W(\cF L^p,
L^\infty)$ with norm (see \cite[Corollary 3.1]{cordero3})
\begin{equation}\label{kernelnormp}
\|K_t\|_{W(\cF L^p, L^\infty)}\asymp |t|^{-d/p}(1+ t^2)^{(d/2)(1/p-1/2)},
\end{equation}
from the fundamental solution \eqref{sol} and the convolution relations for Wiener amalgam spaces
 in Lemma \ref{WA}(i), it turns out, for $2\leq q\leq\infty$, the \emph{$W(\cF L^p,L^q)$ dispersive estimates}
\begin{equation}\label{est1}
 \|e^{it\Delta}u_0\|_{W(\Fur L^{q^\prime}, L^\infty)}\lesssim |t|^{d(2/q-1)}(1+t^2)^{d(1/4-1/q)} \|u_0\|_{W(\Fur L^q,L^1)}.
\end{equation}
As well as for Lebesgue spaces, we can use complex interpolation
between the dispersive estimates \eqref{est1} and the $L^2$
conservation law ($L^2=W(\cF L^2,L^2)$) to obtain the following
\emph{$W(\cF L^p,L^q)$ fixed time estimates}, that combine
 \cite[Theorem 3.5]{cordero} and \cite[Theorem 3.3]{cordero3}.

\begin{theorem}\label{tfix}
For $2\leq q, r,s\leq \infty$ such that
\[
\frac1 s=\frac1 r+\frac 2 q\left(\frac12-\frac1r\right),
\]
we have
\begin{equation}\label{indrel}
\|e^{it\Delta}u_0\|_{W(\Fur L^{s^\prime},  L^r)}\lesssim |t|^{d\left(\frac2q-1\right) \left(1-\frac2r\right)}(1+t^2)^{d\left
(\frac{1}{4}  -\frac{1}{q}\right)\left(1-\frac{2}{r}\right)}
\|u_0\|_{W(\Fur L^{s},L^{r^\prime})}
\end{equation}
In particular, for $s=2$,
\begin{equation}\label{est2i}
\|e^{it\Delta}u_0\|_{W(L^{2}, L^r)}\lesssim (1+
t^2)^{-\frac{d}2\left(\frac{1}{2} -
\frac{1}{r}\right)}\|u_0\|_{W(L^{2},L^{r^\prime})},
\end{equation}
and, for $s=r$,
\begin{equation}\label{dispw}
\|e^{it\Delta}u_0\|_{W(\Fur L^{r^\prime},  L^r)}\lesssim (|t|^{-2}+|t|^{-1})^{d\left(\frac{1}{2}-\frac{1}{r}\right)} \|u_0\|_{W(\Fur L^{r},L^{r^\prime})}.
\end{equation}
\end{theorem}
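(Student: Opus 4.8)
The plan is to derive \eqref{indrel} by complex interpolation between the dispersive estimate \eqref{est1} and the $L^2$ conservation law \eqref{l2}, in exact analogy with the way \eqref{displp} is obtained from \eqref{disp} and \eqref{l2} in the Lebesgue scale. Fix a triple $(q,r,s)$ as in the statement and regard $e^{it\Delta}$, for $t$ fixed, as a linear operator. By \eqref{est1} it is bounded from $W(\cF L^q,L^1)$ to $W(\cF L^{q'},L^\infty)$ with norm $A_0 = C|t|^{d(2/q-1)}(1+t^2)^{d(1/4-1/q)}$, and by \eqref{l2}, rewritten via Plancherel as $L^2 = W(\cF L^2,L^2)$, it is bounded from $W(\cF L^2,L^2)$ to itself with norm $A_1 = 1$. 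For $0<\theta<1$ I would then apply the complex interpolation identity of Lemma \ref{WA}(iii) together with the standard interpolation-of-operators inequality, which gives a bound with norm $\lesssim A_0^{1-\theta}A_1^{\theta} = A_0^{1-\theta}$; the hypothesis of Lemma \ref{WA}(iii) is satisfied because the global component $L^2$, common to source and target, has absolutely continuous norm.

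What remains is to identify the interpolation spaces and do the index bookkeeping. On local components, since the Fourier transform is an isometric isomorphism of the relevant pairs, $[\cF L^q,\cF L^2]_{[\theta]} = \cF L^{a}$ and $[\cF L^{q'},\cF L^2]_{[\theta]} = \cF L^{c}$ reduce to the classical complex interpolation of Lebesgue spaces, giving $1/a = (1-\theta)/q + \theta/2$ and $1/c = (1-\theta)/q' + \theta/2$; on global components $[L^1,L^2]_{[\theta]} = L^{b}$ with $1/b = 1-\theta/2$ and $[L^\infty,L^2]_{[\theta]} = L^{e}$ with $1/e = \theta/2$. Choosing $\theta = 2/r$ (so that $0<\theta<1$ corresponds to $2<r<\infty$), a short computation gives $e = r$, $b = r'$, and, using the index relation of the statement, $a = s$ and $c = s'$. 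Hence the interpolated bound reads $\|e^{it\Delta}u_0\|_{W(\cF L^{s'},L^r)} \lesssim A_0^{1-2/r}\|u_0\|_{W(\cF L^s,L^{r'})}$, and $A_0^{1-2/r}$ is, up to the constant $C^{1-2/r}$ absorbed by $\lesssim$, precisely the time factor $|t|^{d(2/q-1)(1-2/r)}(1+t^2)^{d(1/4-1/q)(1-2/r)}$ appearing in \eqref{indrel}. The two interpolation endpoints must be handled separately but are immediate: $r=\infty$ ($\theta=0$) forces $s=q$ and is \eqref{est1}, while $r=2$ ($\theta=1$) forces $s=2$ and is \eqref{l2}.

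Finally, \eqref{est2i} and \eqref{dispw} follow by specialization. Imposing $s=2$ in the index relation forces $q=2$ (for $r\neq 2$), for which \eqref{est1} carries no $|t|$-power and $(1+t^2)$-exponent $-d/4$, so the time factor becomes $(1+t^2)^{-(d/4)(1-2/r)} = (1+t^2)^{-(d/2)(1/2-1/r)}$ and $\cF L^2 = L^2$ yields \eqref{est2i}. Imposing $s=r$ forces $q=\infty$, and the resulting factor $|t|^{-d(1-2/r)}(1+t^2)^{(d/4)(1-2/r)}$ is comparable to $(|t|^{-2}+|t|^{-1})^{d(1/2-1/r)}$, since $|t|^{-2}+|t|^{-1} = |t|^{-2}(1+|t|)$ and $(1+|t|)^2 \asymp 1+t^2$, which gives \eqref{dispw}. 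There is no genuine analytic difficulty in the argument: the only points that need care are verifying the hypothesis of Lemma \ref{WA}(iii) and treating the two endpoints $r\in\{2,\infty\}$ directly; everything else is the exponent arithmetic indicated above, to be carried out in full in the write-up.
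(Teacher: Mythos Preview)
Your proposal is correct and follows exactly the same route as the paper: complex interpolation via Lemma~\ref{WA}(iii) between the dispersive estimate \eqref{est1} (the endpoint $r=\infty$) and the $L^2$ conservation law \eqref{l2} (the endpoint $r=2$, using $L^2=W(\cF L^2,L^2)$), with interpolation parameter $\theta=2/r$. Your write-up is in fact more detailed than the paper's sketch, since you spell out the index arithmetic, check the absolutely-continuous-norm hypothesis, treat the endpoints separately, and verify the specializations \eqref{est2i} and \eqref{dispw}.
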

\begin{proof}
Let us sketch the proof for the sake of readers. Estimate  \eqref{indrel} follow by complex interpolation between estimate \eqref{est1},
which corresponds to $r=\infty$, and \eqref{l2}, which corresponds to $r=2$.\\
Indeed, $L^2=W(\Fur L^2,L^2)=W( L^2,L^2)$. Using  Lemma
\ref{WA}(iii), with $\theta=2/r$ (observe that $0<2/r<1$), and
$1/{s^\prime}=(1-2/r)/q^{\prime}+(2/r)/2$, so that relation
\eqref{indrel} holds, we obtain
\begin{align*}
\left[W(\Fur L^{q^\prime}, L^\infty),W(\Fur L^2,L^2)\right]_{[\theta]}&=W\left([\Fur L^{q^\prime},\Fur L^2]_{[\theta]}, [L^\infty,L^2]_{[\theta]}\right)\\
&=W(\Fur L^{s^\prime}, L^{r})
\end{align*}
and
\begin{align*}
\left[W(\Fur L^q, L^1),W(\Fur L^2,L^2)\right]_{[\theta]}&=W\left([\Fur L^q,\Fur L^2]_{[\theta]}, [L^1,L^2]_{[\theta]}\right)\\
&=W(\Fur L^{s}, L^{r^\prime}).
\end{align*}
This yields the desired estimate \eqref{indrel}.
\end{proof}

Let us  compare the previous results with the classical $L^p$
estimates. For $2\leqslant r\leqslant \infty,\, \cF
L^{r'}\hookrightarrow L^r$, and the inclusion relations for Wiener
amalgam spaces (Lemma \ref{WA} (ii)) yield $W(\cF L^{r'},
L^r)\hookrightarrow W(L^r,L^r)=L^r$ and
$L^{r'}=W(L^{r'},L^{r'})\hookrightarrow W(\cF L^{r}, L^{r'})$.
Thereby the estimate \eqref{dispw} is an improvement of
\eqref{displp} for every fixed time $t\neq0$, and also uniformly
for $|t|>c>0$. Moreover, in  \cite{cordero3} Cordero and Nicola
 proved that the range $r\geq2$ in \eqref{dispw} is sharp, and
the same for the decay
$t^{-d\left(\frac{1}{2}-\frac{1}{r}\right)}$ at infinity and the
bound $t^{-2d\left(\frac{1}{2}-\frac{1}{r}\right)}$, when $t\to
0$.

Modulation spaces are new settings inherited by time-frequency
analysis where the fixed time estimates recently have been
studied, see \cite{benyi, benyi3, baoxiang, baoxiang2}. Here,
instead of using the representation of the solution $u(t,x)$ in
\eqref{sol}, the solution is written in the form of Fourier
multiplier $e^{it\Delta}u_0$  as in \eqref{scflow}, see
\cite{benyi, benyi3}. Indeed,  a sufficient condition for the
boundedness of a Fourier multiplier on modulation spaces is that
its symbol is in $W(\cF L^1,l^\infty)$ (\cite[Lemma 8]{benyi}).
Moreover,  the Schr\"odinger symbol $\sigma=e^{-it|\xi|^2}$ lives
 in $W(\cF L^1,l^\infty)$ and  its norm is
\[
\begin{split}
\|\sigma\|_{W(\cF L^1,l^\infty)}&= \sup_x \int_{\R^d} |V_g \sigma(x,\omega)| d\omega \\
&\asymp (1+t^2)^{-d/4}\int_{\R^d}
e^{-\frac{\pi}{t^2+1}|\omega|^2}d\omega \asymp (1+t^2)^{d/4},
\end{split}
\]
where $g(\xi)=e^{-\pi|\xi|^2}$. Then, by \cite[Lemma 2]{benyi3}
(also for $s=0$ \cite[Corollary 18]{benyi}) one has that
$e^{it\Delta}$ extends to a bounded operator on $\mathcal{M}_s^{p,q}$, i.e.,
the \emph{$\mathcal{M}_s^{p,q}$  fixed time estimates}
\begin{equation}\label{dispmpq1}
\|u(t,x)\|_{\mathcal{M}_s^{p,q}}\lesssim (1+|t|)^{d/2} \|u_0\|_{\mathcal{M}_s^{p,q}},
\end{equation}
for all $s\geqslant 0$ and $1\leqslant p,q \leqslant \infty$. In
particular, modulation space properties are preserved by the time
evolution of the Schr\"odinger equation, in strong contrast with
the case of Lebesgue spaces. Observe that \eqref{dispmpq1}, in the
case $s=0$, was also obtained using isometric decompositions in
\cite{baoxiang}. Later, Wang, Zaho, Guo in \cite{baoxiang2} obtain
the following   fixed time estimates
\begin{equation}\label{dispmpq2}
\|u(t,x)\|_{\mathcal{M}_s^{p,q}}\lesssim (1+|t|)^{-d(1/2-1/p)}
\|u_0\|_{\mathcal{M}_s^{p',q}},
\end{equation}
for all  $s\in\R,\, 2\leqslant p\leqslant \infty$ and $1\leqslant
q\leqslant \infty$. Comparing \eqref{displp} with \eqref{dispmpq1}
and \eqref{dispmpq2}, we see that the singularity at $t=0$
contained in \eqref{displp} has been removed in \eqref{dispmpq1}
and \eqref{dispmpq2} and the decay rate in \eqref{dispmpq2} when
$t=\infty$ is the same one as in \eqref{displp}. The
estimate \eqref{dispmpq2} also indicates that $e^{it\Delta}$ is
uniformly bounded on $\mathcal{M}^{2,q}$. The complex interpolation between
the case $p=2$ in \eqref{dispmpq2}, and $p=\infty$ in
\eqref{dispmpq1} yields
\begin{equation}\label{dispmpq}
\|u(t,x)\|_{\mathcal{M}_s^{p,q}}\lesssim (1+|t|)^{d(1/2-1/p)} \|u_0\|_{\mathcal{M}_s^{p,q}},
\end{equation}
for all $2\leqslant p\leqslant \infty,\,s\geqslant 0$. However, it
is still not clear whether  the growth order on time in the
right-hand side of \eqref{dispmpq} is optimal.

\section{Strichartz estimates}

In many applications, especially in the study of well-posedness of
PDE's, it is useful to have estimates for the solution both in
time and space variables. In this direction, the main result  is
represented by  the \emph{Strichartz estimates}.  First, let us
introduce the following definitions.

\begin{definition}\label{DefSchradm}
Following \cite{keel}, we say that the exponent pair $(q,r)$ is \emph{Schr\"odinger-admissible} if $d\geqslant 1$ and
\[
2\leqslant q,r\leqslant \infty,\quad \frac{1}{q}=\frac{d}{2}\Big(\frac{1}{2}-\frac{1}{r}\Big), \quad (q,r,d)\neq (2,\infty,2).
\]
\end{definition}

\begin{definition}
Following \cite{foschi}, we say that the exponent pair $(q,r)$ is \emph{Schr\"odinger-acceptable} if
\[
1\leqslant q< \infty,\quad 2\leqslant r\leqslant \infty,\quad \frac{1}{q}<d\Big(\frac{1}{2}-\frac{1}{r}\Big),
 \text{ or } \quad (q,r)= (\infty,2).
\]
\end{definition}

The original version  of Strichartz estimates in $L^p$ spaces, closely related to restriction problem of Fourier
 transform to surfaces, was elaborated  by Robert Strichartz \cite{strichartz} in 1977(who, in turn, had precursors in \cite{segal, tomas}). In 1995 a brilliant idea of Ginibre and Velo \cite{GinibreVelo} was the use of the  $T^*T$ Method (Lemma \ref{lemt*t})  to detach the couple $(q,r)$ from $(q',r')$ (see also \cite{Yajima87}). The  study of the endpoint case $(q,r)=(2,2d/(d-2))$ is treated in \cite{keel}, where Keel and Tao prove the estimate also for the endpoint when $d\geq 3$ (for $d=2$, the endpoint is $(q,r)=(2,\infty)$ and the estimate is false). We shall give a standard proof of the \emph{$L^p$ Stichartz estimates}  in the non-endpoint cases \cite{DAnc,zucco} (see also \cite{tao} where the following theorem is proved  using an abstract lemma, the \emph{Christ-Kiselev Lemma}, which is very useful in establishing retarded Strichartz estimates).

\begin{theorem}\label{teostrichartzlp}
For any Schr\"odinger-admissible couples $(q,r)$ and $(\tilde{q},\tilde{r})$ one has the homogeneous Strichartz estimates
\begin{equation}\label{strom}
\|e^{it\Delta}u_0\|_{L_t^q L_x^r(\R\times\R^{d})}\lesssim \|u_0\|_{L_x^2(\R^d)},
\end{equation}
the dual homogeneous Strichartz estimates
\begin{equation}\label{strdualom}
\Big\|\int_\R e^{-is\Delta}F(s,\cdot)\,ds\Big\|_{L_x^2(\R^d)}\lesssim \|F\|_{L_t^{\tilde{q}'}L_x^{\tilde{r}'}(\R\times\R^{d})},
\end{equation}
and the inhomogenous (retarded) Strichartz estimates
\begin{equation}\label{strnonom}
\Big\|\int_{s<t} e^{i(t-s)\Delta}F(s,\cdot)\,ds\Big\|_{L_t^q L_x^r(\R\times\R^{d})}\lesssim \|F\|_{L_t^{\tilde{q}'}L_x^{\tilde{r}'}(\R\times\R^{d})}.
\end{equation}
\begin{proof}
We shall only prove this theorem in the non-endpoint case, when
$q\neq 2$, addressing the interested reader to \cite{keel} for
the whole study. We use the \emph{$T^*T$ method} as follows. Let
$(q,r)$ be Schr\"odinger admissible and  consider the linear
operator $T :L_t^{1}L_x^{2}\longrightarrow  L_x^2$, defined as
\[
T(F)=\int_\R e^{-is\Delta} F(s,\cdot) ds.
\]
Its adjoint $T^* : L_x^2 \longrightarrow L_t^{\infty}L_x^{2}$ is
the Schr\"odinger propagator \eqref{scflow}
\[
T^*(u) = e^{it\Delta}u.
\]
Applying Minkowski's inequality, the fixed time estimate \eqref{displp} and \eqref{conv1}, we obtain the diagonal untruncated estimates
\[
\begin{split}
\Big\|\int_{\R} e^{i(t-s)\Delta} F(s,\cdot)\, ds\Big\|_{L_t^q L_x^r(\R\times\R^{d})}&\leqslant \Big\|\int_{\R}\| e^{i(t-s)\Delta} F(s,\cdot)\|_{L_x^r(\R^d)}\, ds\Big\|_{L_t^q (\R)}\\
 &\lesssim \Big\|\,\|F\|_{L_x^{r'}(\R^d)}*\frac{1}{|t|^{d(\frac{1}{2}-\frac{1}{r})}}\Big\|_{_{L_t^q (\R)}}\\
 &\lesssim \|F\|_{L_t^{q'}L_x^{r'}(\R\times\R^{d})},
\end{split}
\]
whenever $2<q,r\leqslant \infty$ are such that
$\frac{2}{q}+\frac{d}{r}=\frac{d}{2}$, and for any Schwartz
function $F\in\cS(\R\times\R^{d})$. Then, using Lemma \ref{lemt*t},
one obtains the homogeneous Strichartz estimates \eqref{strom} and
the corresponding dual homogeneous Strichartz estimates
\eqref{strdualom}. Corollary \ref{cort*t} applied to the previous
two estimates yields the non-diagonal untruncate estimates:
\[
\Big\| \int_\R e^{i(t-s)\Delta}F(s,\cdot)\,ds\Big\|_{L_t^q L_x^r(\R\times\R^{d})}\leqslant \|F\|_{L_t^{\tilde{q}'}L_x^{\tilde{r}'}(\R\times\R^{d})}.
\]
By untruncated diagonal estimates one  obtains the diagonal ones for the truncated operator, noting that
\[
\begin{split}
\Big\|\int_{-\infty}^t e^{i(t-s)\Delta}F(s,\cdot)\,ds\Big\|_{L_t^qL_x^r(\R\times\R^{d})}&\leqslant \Big\|\int_{-\infty}^{t} \|e^{i(t-s)\Delta}F(s,\cdot)\|_{L_x^r(\R^d)}\,ds\Big\|_{L_t^q(\R)}\\
&\leqslant \Big\|\int_{\R} \|e^{i(t-s)\Delta}F(s,\cdot)\|_{L_x^r(\R^d)}\,ds\Big\|_{L_t^q(\R)}\\
&\lesssim \|F\|_{L_t^{q'}L_x^{r'}(\R\times\R^{d})}.
\end{split}
\]
Moreover,  using Lemma \ref{lemmaoptronc}, with
$X=L^{q'}_tL^{r'}_x$ and the truncated operator
$(T^*T)_RF(t)=\int_0^t e^{i(t-s)\Delta}F(s)\,ds,$ one obtains
\begin{equation}\label{s1}
\Big\|\int_0^t e^{i(t-s)\Delta}F(s,\cdot)ds\Big\|_{L_t^{\infty}L_x^2(\R\times\R^{d})} \lesssim \|F\|_{L_t^{q'}L_x^{r'}},
\end{equation}
for all admissible pairs $(q,r)$.
Then, by complex interpolation between this estimate and  the  diagonal truncated ones   above one gets the non-diagonal truncate estimates
\eqref{strnonom}, for any couple $(q,r),\,(\tilde{q},\tilde{r})$ Schr\"odinger admissible.
\end{proof}
\end{theorem}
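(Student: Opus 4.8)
The plan is to run the abstract $T^*T$ machinery of Lemma~\ref{lemt*t} and Corollary~\ref{cort*t} with Hilbert space $\cH=L^2_x(\R^d)$. I would take $T\colon L^1_tL^2_x\to L^2_x$ to be $T(F)=\int_\R e^{-is\Delta}F(s,\cdot)\,ds$; since $e^{it\Delta}$ is unitary on $L^2_x$, the hypotheses of Lemma~\ref{lemt*t} hold with $a=1$ and $X=L^1_tL^2_x$, the adjoint is $T^*u=e^{it\Delta}u\in L^\infty_tL^2_x$, and $T^*T$ is the untruncated propagator $F\mapsto\int_\R e^{i(t-s)\Delta}F(s,\cdot)\,ds$. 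The whole theorem will then be produced by feeding a single analytic estimate into this abstract scheme.

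That estimate is the \emph{diagonal untruncated bound}
\[
\Big\|\int_\R e^{i(t-s)\Delta}F(s,\cdot)\,ds\Big\|_{L^q_tL^r_x}\lesssim \|F\|_{L^{q'}_tL^{r'}_x},
\]
for $(q,r)$ Schr\"odinger-admissible with $q\neq 2$. First I would apply Minkowski's integral inequality to pull the $L^r_x$ norm inside the $s$-integral, then the fixed-time dispersive estimate \eqref{displp} to dominate $\|e^{i(t-s)\Delta}F(s,\cdot)\|_{L^r_x}$ by $|t-s|^{-d(1/2-1/r)}\|F(s,\cdot)\|_{L^{r'}_x}$, and finally the one-dimensional Hardy--Littlewood--Sobolev inequality \eqref{eqhlsom} to the convolution in $t$ of $|t|^{-d(1/2-1/r)}$ with $\|F(\cdot)\|_{L^{r'}_x}$. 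The bookkeeping: reading \eqref{conv1} with ambient dimension $1$ and kernel exponent $d(1/2-1/r)$, the exponent condition becomes $1/q=1/q'-(1-d(1/2-1/r))$, which collapses exactly to $1/q=\tfrac d2(1/2-1/r)$, i.e. Schr\"odinger admissibility; and the restrictions $0<d(1/2-1/r)<1$ and $1<q'<q<\infty$ in \eqref{eqhlsom} force $q>2$, which is precisely why the endpoint $q=2$ must be excluded here.

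Granting this, Lemma~\ref{lemt*t} with $X=L^{q'}_tL^{r'}_x$ immediately gives the homogeneous estimate \eqref{strom} and its dual \eqref{strdualom}, and Corollary~\ref{cort*t} applied to two admissible pairs $(q,r)$, $(\tilde q,\tilde r)$ gives the non-diagonal untruncated estimate. For the retarded operator $(T^*T)_RF(t)=\int_{s<t}e^{i(t-s)\Delta}F(s,\cdot)\,ds$ I would observe first that its diagonal bound is free: the $s$-integral over $\{s<t\}$ of the nonnegative integrand $\|e^{i(t-s)\Delta}F(s,\cdot)\|_{L^r_x}$ is dominated by the integral over all of $\R$. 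The only genuinely new input is the mixed endpoint $\|\int_0^t e^{i(t-s)\Delta}F(s,\cdot)\,ds\|_{L^\infty_tL^2_x}\lesssim \|F\|_{L^{q'}_tL^{r'}_x}$, which follows from Lemma~\ref{lemmaoptronc} with $X=L^{q'}_tL^{r'}_x$ (time cut-off stable). Complex interpolation between this $L^\infty_tL^2_x$ bound and the diagonal retarded estimates then yields \eqref{strnonom} in the full admissible range.

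I expect the main difficulty to be organizational rather than conceptual: lining up the Hardy--Littlewood--Sobolev exponents, the admissibility relation, and the interpolation parameters simultaneously, and being careful about the excluded endpoint $q=2$, where HLS degenerates and one must instead invoke the bilinear/duality argument of \cite{keel}. Using Lemma~\ref{lemmaoptronc} also has the pleasant effect of bypassing the Christ--Kiselev lemma, which is the more common device for controlling the retarded truncation.
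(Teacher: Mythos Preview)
Your proposal is correct and follows essentially the same route as the paper: the $T^*T$ setup with $T(F)=\int_\R e^{-is\Delta}F(s)\,ds$, the diagonal untruncated bound via Minkowski, the dispersive estimate \eqref{displp} and Hardy--Littlewood--Sobolev \eqref{eqhlsom}, then Lemma~\ref{lemt*t} and Corollary~\ref{cort*t} for the homogeneous and non-diagonal untruncated estimates, the positivity trick for the diagonal retarded bound, Lemma~\ref{lemmaoptronc} for the $L^\infty_tL^2_x$ endpoint, and complex interpolation to close. Even your remarks on the HLS exponent bookkeeping, the exclusion of $q=2$, and the avoidance of Christ--Kiselev mirror the paper's presentation.
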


The estimates are known to fail  at the endpoint $(q,r,d)=(2,\infty,2)$, see \cite{montgomery}, where Smith constructed a counterexample using the
 Brownian motion, although the homogeneous estimates can be saved  assuming spherical symmetry \cite{mnno, stefanov, tao2}. The exponents in the
 homogeneous estimates are optimal (\cite[Exercise 2.42]{tao});  some additional estimates are instead  available in the inhomogeneous case
 (see, for example, \cite{oberlin}). Indeed, Kato \cite{kato} proved that inhomogeneous estimates \eqref{strnonom} hold true
 when the pairs $(q,r)$ and $(\tilde{q},\tilde{r})$ are Schr\"odinger acceptable and satisfy the scaling condition
  $1/q+1/\tilde{q}=d/2(1-1/r-1/\tilde{r})$ in the range $1/r,1/\tilde{r}>(d-2)/(2d)$. Afterwards,  for $d>2$, Foschi \cite{foschi} improved this result by looking for the optimal range of Lebesgue exponents for which inhomogeneous Strichartz estimates hold (results almost equivalent have recently obtained by Vilela \cite{vilela}). Actually, this range is larger than the one given by admissible exponents for homogeneous estimates, as was shown by
 the following result \cite[Proposition 24]{foschi}.
\begin{proposition}
If $v$ is the solution to \eqref{cp2}, with zero initial data and inhomogeneous term $F$ supported on $\R\times\R^d$, then we have the estimate
\begin{equation}
\|v\|_{L_t^q L_x^r(\R\times\R^d)}\lesssim \|F\|_{L_t^{\tilde{q}'}
L_x^{\tilde{r}'}(\R\times\R^d)}
\end{equation}
whenever $(q,r),(\tilde{q},\tilde{r})$ are Schr\"odinger acceptable pairs which satisfy the scaling condition
\[
\frac{1}{q}+\frac{1}{\tilde{q}}=\frac{d}{2}\Big(1-\frac{1}{r}-\frac{1}{\tilde{r}}\Big),
\]
and either the conditions
\[\frac{1}{q}+\frac{1}{\tilde{q}}<1,\,\quad\frac{d-2}{r}\leqslant \frac{d}{\tilde{r}},\,\quad \frac{d-2}{\tilde{r}}\leqslant \frac{d}{r}
\]
or the conditions
\[\frac{1}{q}+\frac{1}{\tilde{q}}=1,\,\quad\frac{d-2}{r}< \frac{d}{\tilde{r}},\,\quad\frac{d-2}{\tilde{r}}< \frac{d}{r},\,\quad\frac{1}{r}\leqslant\frac{1}{q},\quad \frac{1}{\tilde{r}}\leqslant\frac{1}{\tilde{q}}.
\]
\end{proposition}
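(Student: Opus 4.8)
The plan is to reduce to a bilinear inequality for the retarded operator $v(t)=\int_{s<t}e^{i(t-s)\Delta}F(s)\,ds$, to dispatch the time truncation by the Christ--Kiselev lemma, and then to reach the full range of exponents by interpolating two easy base cases, the two off-diagonal inequalities in the statement being exactly the faces of the resulting polytope. By density of $\cS(\R\times\R^d)$ and duality the claim is equivalent to
\[
\Big|\int_\R\!\int_{s<t}\langle e^{i(t-s)\Delta}F(s,\cdot),G(t,\cdot)\rangle\,ds\,dt\Big|\lesssim\|F\|_{L_t^{\tilde q'}L_x^{\tilde r'}}\,\|G\|_{L_t^{q'}L_x^{r'}}
\]
for Schwartz $F,G$; call the left side $|\cB(F,G)|$. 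Because complex interpolation of mixed Lebesgue norms is affine in the reciprocal coordinates $(1/q,1/r,1/\tilde q,1/\tilde r)$, the set of quadruples for which the estimate holds is convex, so it is enough to verify it on a family of base quadruples whose convex hull inside the scaling hyperplane $1/q+1/\tilde q=\tfrac d2(1-1/r-1/\tilde r)$ covers the region cut out by the hypotheses. Finally, in the main case $1/q+1/\tilde q<1$ one has $q>\tilde q'$, so by the Christ--Kiselev lemma it suffices to treat the \emph{untruncated} operator $\int_\R e^{i(t-s)\Delta}F(s)\,ds=e^{it\Delta}\!\int_\R e^{-is\Delta}F(s)\,ds$.

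\emph{Base cases.} If both $(q,r)$ and $(\tilde q,\tilde r)$ are Schr\"odinger-admissible, the untruncated estimate is immediate: by Fubini and unitarity $\cB$ factors as $\langle\int e^{-is\Delta}F(s)\,ds,\int e^{-it\Delta}G(t)\,dt\rangle$, and Cauchy--Schwarz followed by the dual homogeneous estimate \eqref{strdualom} (used once with $(q,r)$ and once with $(\tilde q,\tilde r)$) closes it; this case is also contained in \eqref{strnonom} of Theorem~\ref{teostrichartzlp}. If $r=\tilde r$, no decomposition is needed: by Minkowski's inequality and the fixed-time dispersive estimate \eqref{displp},
\[
\|v(t,\cdot)\|_{L_x^r}\lesssim\Big(|\cdot|^{-d(\frac12-\frac1r)}\ast_t\|F(\cdot,\cdot)\|_{L_x^{r'}}\Big)(t),
\]
and taking the $L_t^q$ norm and applying the one-dimensional Hardy--Littlewood--Sobolev inequality \eqref{eqhlsom} gives $\|v\|_{L_t^qL_x^r}\lesssim\|F\|_{L_t^{\tilde q'}L_x^{r'}}$: the scaling hypothesis is exactly the identity $d(\tfrac12-\tfrac1r)=\tfrac1q+\tfrac1{\tilde q}$ matching the two Hardy--Littlewood--Sobolev exponents, while $\tfrac1q+\tfrac1{\tilde q}<1$ is exactly the gap condition $\tilde q'<q$ (and forces $0<d(\tfrac12-\tfrac1r)<1$); the pairs $(\infty,2)$ are covered directly by \eqref{strdualom}.

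\emph{Filling the range.} In the main case one interpolates the two base families: I expect that the region $\{\,\text{scaling};\ \tfrac1q+\tfrac1{\tilde q}<1;\ 2\le r,\tilde r\le\infty;\ (q,r),(\tilde q,\tilde r)\ \text{Schr\"odinger-acceptable}\,\}$ is precisely the convex hull of the admissible quadruples and the diagonal ($r=\tilde r$) quadruples, the two conditions $\tfrac{d-2}r\le\tfrac d{\tilde r}$ and $\tfrac{d-2}{\tilde r}\le\tfrac dr$ being the two supporting half-spaces that close this hull (they collapse to $d-2\le d$ on the diagonal, hence are vacuous there). The boundary face $1/q+1/\tilde q=1$ must be handled on its own, since there both the Christ--Kiselev reduction and the $L^2$ factorization break down: one decomposes $\cB=\sum_{j\in\bZ}\cB_j$ dyadically in $t-s\sim 2^j$ and plays off a \emph{dispersive} block estimate (from \eqref{displp} with a diagonal spatial exponent, H\"older in $x$, Young against $\chi_{[2^j,2^{j+1})}$ in $t$) against an \emph{energy} block estimate (covering time by intervals of length $2^j$, applying \eqref{strdualom} on each, summing over them by Cauchy--Schwarz), interpolating bilinearly and summing the geometric series; this is where the additional hypotheses $1/r\le1/q$, $1/\tilde r\le1/\tilde q$ and the \emph{strict} off-diagonal inequalities are used.

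\emph{Main obstacle.} The analytic content is light; the work is the exponent bookkeeping. One must check that the stated inequalities cut out \emph{exactly} the reachable polytope---concretely, write a general target quadruple as an explicit convex combination of admissible and diagonal base quadruples---and separately handle the endpoint face $1/q+1/\tilde q=1$, where the Hardy--Littlewood--Sobolev step sits at its forbidden exponent: there the dyadic block estimates have to be refined to a restricted-weak-type (Lorentz) form and summed carefully, and it is precisely $1/r\le1/q$, $1/\tilde r\le1/\tilde q$ and strictness in the off-diagonal constraints that keep the sum convergent. This boundary subcase is the genuinely delicate part of the argument.
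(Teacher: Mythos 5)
First, a point of reference: the paper does not prove this proposition at all; it is quoted verbatim as \cite[Proposition 24]{foschi}, and for both the proof and the sharpness discussion the reader is sent to \cite{foschi}. So your attempt has to be judged on its own merits against Foschi's argument, which runs a dyadic decomposition of the bilinear form in $|t-s|\sim 2^j$, proves on each dyadic block both a dispersive-type and a localized Strichartz (``energy'') estimate, and sums the blocks by bilinear real interpolation in Lorentz spaces --- over the \emph{whole} region, not only on the face $1/q+1/\tilde q=1$.

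The decisive gap in your proposal is the convex-hull claim, which is false. Both of your base families live in the Kato range $2\le r,\tilde r\le \frac{2d}{d-2}$: admissible pairs satisfy $1/r\ge \frac{d-2}{2d}$ by definition (since $q\ge 2$ and $\frac1q=\frac d2(\frac12-\frac1r)$), and your diagonal family $r=\tilde r$ requires $d(\frac12-\frac1r)<1$, i.e. again $1/r>\frac{d-2}{2d}$, for the one-dimensional Hardy--Littlewood--Sobolev step \eqref{eqhlsom} to apply. Since the constraint $1/r\ge\frac{d-2}{2d}$, $1/\tilde r\ge\frac{d-2}{2d}$ is preserved under convex combinations of the reciprocal exponents, no interpolation between these two families can ever produce a quadruple with $r>\frac{2d}{d-2}$ or $\tilde r>\frac{2d}{d-2}$. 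But such quadruples are precisely the new content of the proposition: e.g.\ for $d=3$ the pairs $(q,r)=(2,8)$, $(\tilde q,\tilde r)=(4,8/3)$ satisfy the scaling condition, acceptability, $1/q+1/\tilde q=3/4<1$ and $\frac{d-2}{r}\le\frac d{\tilde r}$, $\frac{d-2}{\tilde r}\le\frac dr$, yet $r=8>6=\frac{2d}{d-2}$, so this point is in the claimed region but outside the closed convex hull of your base cases. Indeed, as the paper itself recalls, the range $1/r,1/\tilde r>\frac{d-2}{2d}$ is exactly Kato's earlier theorem, and Foschi's improvement consists in escaping it; your scheme therefore reproves (essentially) Kato's result and cannot reach Foschi's. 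The Christ--Kiselev reduction itself is not the obstruction (in Foschi's region, which is symmetric in the two pairs, the untruncated operator is indeed bounded, being the sum of the retarded operator and the adjoint of the retarded operator with the pairs swapped); the problem is that your proof of the untruncated bound only covers the hull of the two base families. To go further you need, already in the interior $1/q+1/\tilde q<1$, the dyadic $|t-s|\sim 2^j$ decomposition with block estimates interpolating between dispersive decay \eqref{displp} and time-localized Strichartz/dual estimates \eqref{strdualom}, summed via bilinear (real) interpolation --- i.e.\ the machinery you reserve for the boundary face $1/q+1/\tilde q=1$ is in fact the core of the proof everywhere, and your sketch of it, even on that face, is too thin to certify the exponent bookkeeping that the strict inequalities and the conditions $1/r\le 1/q$, $1/\tilde r\le 1/\tilde q$ are meant to encode.
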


For a discussion about the sharpness of this proposition we refer
to \cite{foschi}, where explicit counterexamples are constructed
to show the necessary conditions for inhomogeneous Strichartz
estimates.

Since the Schr\"odinger operator $e^{it\Delta}$ commutes with
Fourier multipliers like $|\Delta|^s$ or $\left\langle
\Delta\right\rangle^s$, it is easy to obtain  Strichartz estimates
for potential and Sobolev spaces. In particular, if $I$ is an
interval containing the origin and  $u:I\times\R^d\to\mathbb{C}$
is the solution to the inhomogeneous Schr\"odinger equation with
initial data $u_0\in \dot{H}_x^s(\R^d)$, given by the Duhamel
formula (\ref{eqschrnonomo}),  then, applying $|\Delta|^s$ to both
sides of the equation and using the estimate of Theorem
\ref{teostrichartzlp}, one obtains
\[
\|u\|_{L_t^q \dot{W}_x^{s,r}(I\times\R^d)}\lesssim \|u_0\|_{\dot{H}_x^s(\R^d)}+\|F\|_{L_t^{\tilde{q}'}\dot{W}_x^{s,\tilde{r}'}(I\times\R^d)}
\]
for all Schr\"odinger admissible couples  $(q,r)$ and
$(\tilde{q},\tilde{r})$.  In particular, if one considers the
homogeneous case (i.e. $F=0$), the Sobolev embedding
$\dot{W}_x^{s,r}\hookrightarrow L_x^{r_1},\, 0<s<d/2$ and
$1/r_1=1/r-s/d$, yields the \emph{$\dot{H}^s$ Stichartz estimates}
\[
\|u\|_{L_t^q
L_x^{r_1}(I\times\R^d)}\lesssim \|u_0\|_{\dot{H}_x^s(\R^d)},\quad\frac{2}{q}+\frac{d}{r_1}+s=\frac{d}{2}.
\] Since $s>0$ one has
\[
\frac{2}{q}+\frac{n}{r_1}<\frac{n}{2},
\]
hence, for any fixed value of $s$, the new Schr\"odinger
admissible
 couple $(q,r_1)$ lies on a parallel line below the corresponding case $s=0$.

Strichartz estimates in Wiener amalgam spaces enable us to control
the local regularity and decay at infinity of the solution
\emph{separately}. For comparison, the classical estimates
\eqref{strom} can be rephrased in terms of Wiener amalgam spaces
as follows:
\begin{equation}\label{S1W}
\|e^{it\Delta}u_0\|_{W(L^q,L^q)_t W(L^r,L^r)_x}\lesssim \|u_0\|_{L_x^2}.
\end{equation}
In this framework, Cordero and Nicola perform these estimates
mainly in two directions. First, in \cite{cordero}, for
$q\geqslant4$ they modify the classical estimate \eqref{S1W} by
(conveniently) moving local regularity from the time variable to
the space variable. Indeed, $\cF L^{r'}\subset L^r$ if $r\geqslant
2$, but the bound in \eqref{dispw} is worse than the one in
\eqref{displp}, as $t\to 0$; consequently one has
\begin{equation}\label{necessarie}
\|e^{it\Delta}u_0\|_{W(L^{q/2},L^q)_t W(\cF L^{r'},L^r)}\lesssim
\|u_0\|_{L_x^2},
\end{equation}
for $4< q\leqslant \infty,\,2\leqslant r\leqslant \infty$, with $(q,r)$ Schr\"odinger admissible.
 When $q=4$ the same estimate holds with the Lorentz space $L^{r',2}$ in place of $L^{r'}$.
 Dual homogeneous and retarded estimates hold as well. Thereby, the solution averages locally in time by the $L^{q/2}$ norm, which
 is rougher than the $L^q$ norm in \eqref{strom} or, equivalently, in \eqref{S1W}, but it displays an $\cF L^{r'}$ behavior
  locally in space, which is better
 than $L^{r}$. In \cite{cordero3} it is shown the sharpness of these Strichartz estimates, except for the
threshold $q\geq4$, which seems quite hard to obtain. Secondly, in
\cite{cordero3}, a converse approach is performed,  by showing
that it is possible to move local regularity in \eqref{strom} from
the space variable to the time variable. As a result, new
estimates involving the Wiener amalgam spaces $W(L^p,L^q)$, that
generalize \eqref{strom}, are obtained, i.e., the following
\cite[Theorem 1.1]{cordero3}.

\begin{theorem}\label{prima2i} Let $1\leq
q_1,r_1\leq\infty$, $2\leq q_2,r_2\leq\infty$ such that $r_1\leq
r_2$,
\begin{equation}\label{pri1}\frac{2}{q_1}+
\frac{d}{r_1}\geq\frac{d}{2},
\end{equation}
\begin{equation}\label{pri2}\frac{2}{q_2}+
\frac{d}{r_2}\leq\frac{d}{2},
\end{equation}
$(r_1,d)\not=(\infty,2)$,
$(r_2,d)\not=(\infty,2)$ and,
if $d\geq 3$, $r_1\leq
2d/(d-2)$. Assume the same for
$\tilde{q}_1,\tilde{q}_2,\tilde{r}_1,\tilde{r}_2$.
Then, we have the homogeneous
Strichartz estimates
\begin{equation}\label{hom2i}\|e^{it\Delta}
u_0\|_{W(L^{q_1},L^{q_2})_t
W(
L^{r_1},L^{r_2})_x}\lesssim
\|u_0\|_{L^2_x},
\end{equation}
the dual homogeneous Strichartz estimates
\begin{equation}\label{dh2i}
\|\int e^{-is\Delta}
F(s)\,ds\|_{L^2}\lesssim
\|F\|_{W(L^{\tilde{q}_1^\prime},L^{\tilde{q}_2^\prime})_t
W(
L^{\tilde{r}_1^\prime},L^{\tilde{r}_2^\prime})_x},
\end{equation}
and the retarded Strichartz estimates
\begin{equation}\label{ret2i}
\|\int_{s<t} e^{i(t-s)\Delta}
F(s)\,ds\|_{W(L^{q_1},L^{q_2})_t W( L^{r_1},L^{r_2})_x} \lesssim\|F\|_{W(L^{\tilde{q}_1^\prime},L^{\tilde{q}_2^\prime})_t
W(L^{\tilde{r}_1^\prime},L^{\tilde{r}_2^\prime})_x}.
\end{equation}
\end{theorem}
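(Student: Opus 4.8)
The plan is to deduce all three estimates from the classical $L^p$ Strichartz estimates of Theorem~\ref{teostrichartzlp} by purely functorial means: the inclusion relations for Wiener amalgam spaces (Lemma~\ref{WA}(ii)) together with the identification $L^p=W(L^p,L^p)$. Indeed, applying \eqref{lp} first in the space variable and then in the time variable (exactly as for the iterated mixed norms recalled in Section~2) one gets the embedding
\[
L_t^qL_x^r=W(L^q,L^q)_tW(L^r,L^r)_x\hookrightarrow W(L^{q_1},L^{q_2})_tW(L^{r_1},L^{r_2})_x
\]
whenever $q_1\le q\le q_2$ and $r_1\le r\le r_2$, and, replacing each index by its conjugate, also
\[
W(L^{q_1'},L^{q_2'})_tW(L^{r_1'},L^{r_2'})_x\hookrightarrow L_t^{q'}L_x^{r'}
\]
under the same conditions. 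Hence the whole statement follows once one exhibits, inside the prescribed (non-degenerate) rectangle of exponents, a pair $(q,r)$ that is Schr\"odinger-admissible in the sense of Definition~\ref{DefSchradm}, together with a Schr\"odinger-admissible $(\tilde{q},\tilde{r})$ lying in the $\tilde{q}_i,\tilde{r}_i$ rectangle.

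So the heart of the matter is the elementary claim: under \eqref{pri1}, \eqref{pri2}, $r_1\le r_2$, the endpoint exclusions $(r_i,d)\ne(\infty,2)$, the condition $r_1\le 2d/(d-2)$ for $d\ge3$ (and the non-degeneracy $q_1\le q_2$), there is a Schr\"odinger-admissible $(q,r)$ with $q_1\le q\le q_2$ and $r_1\le r\le r_2$. I would argue by parametrising the admissible line $2/q+d/r=d/2$, $2\le q,r\le\infty$, by the variable $r$: along it $q=q(r)$ is decreasing, and it runs from $(\infty,2)$ to $(2,2d/(d-2))$ when $d\ge3$ (to $(2,\infty)$, the excluded endpoint aside, when $d\le2$). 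On this line the requirement $q\le q_2$ is equivalent to $r\ge r_-:=d/(d/2-2/q_2)$, and \eqref{pri2} says precisely that $r_-\le r_2$; the requirement $q\ge q_1$ is equivalent to $r\le r_+:=d/(d/2-2/q_1)$, and \eqref{pri1} says precisely that $r_1\le r_+$. Intersecting with the admissibility range of $r$, one then checks that each of the lower bounds in $\{r_1,r_-,2\}$ does not exceed each of the upper bounds in $\{r_2,r_+,2d/(d-2)\}$ --- this is exactly where $r_1\le r_2$, $q_1\le q_2$, $q_2\ge2$, and $r_1\le 2d/(d-2)$ all enter --- so the feasible $r$-interval is non-empty; any $r$ in its interior also avoids the forbidden endpoint $(q,r,d)=(2,\infty,2)$, and we set $q=q(r)$. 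The same reasoning, applied verbatim to the tilde data, furnishes $(\tilde{q},\tilde{r})$.

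Granting this, the three estimates drop out. The homogeneous estimate \eqref{hom2i} follows by applying the classical \eqref{strom} for $(q,r)$ and then the first (upward) embedding above. For the dual homogeneous estimate \eqref{dh2i} one uses the classical \eqref{strdualom} for $(\tilde{q},\tilde{r})$, bounding $\|\int_\R e^{-is\Delta}F(s)\,ds\|_{L^2}$ by $\|F\|_{L_t^{\tilde{q}'}L_x^{\tilde{r}'}}$, and then the second (downward) embedding for the tilde rectangle; equivalently, \eqref{dh2i} is the Banach-space adjoint of \eqref{hom2i} written for the tilde exponents, via the duality relation \eqref{duality}. For the retarded estimate \eqref{ret2i} one sandwiches: the downward embedding for the tilde rectangle controls $\|F\|_{L_t^{\tilde{q}'}L_x^{\tilde{r}'}}$ by the right-hand side of \eqref{ret2i}, the classical retarded estimate \eqref{strnonom} for the pairs $(q,r)$ and $(\tilde{q},\tilde{r})$ then controls $\|\int_{s<t}e^{i(t-s)\Delta}F(s)\,ds\|_{L_t^qL_x^r}$, and finally the upward embedding promotes this to the $W(L^{q_1},L^{q_2})_tW(L^{r_1},L^{r_2})_x$-norm on the left.

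The only genuine work is the geometric claim of the second paragraph: one must verify that the admissible line meets the rectangle in \emph{every} configuration allowed by the hypotheses, which forces a short case analysis according to whether $d=1$, $d=2$ or $d\ge3$, and an honest check that the excluded endpoint $(2,\infty,2)$ is never the sole candidate --- this is exactly what the conditions $(r_i,d)\ne(\infty,2)$ and $r_1\le 2d/(d-2)$ are there to prevent. A secondary, purely bookkeeping matter is keeping the conjugate exponents consistent in the dual and retarded statements, observing that the rectangles are non-degenerate, and justifying the iterated mixed-norm inclusions, for which one relies on the identity $\|F\|_{W(L^{q_1},L^{q_2})_tW(L^{r_1},L^{r_2})_x}=\|F\|_{W(L^{q_1}_t(W(L^{r_1}_x,L^{r_2}_x)),L^{q_2}_t)}$ recorded in Section~2 together with Lemma~\ref{WA}(ii) applied one variable at a time.
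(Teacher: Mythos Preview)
Your argument has a genuine gap: you have silently added the hypothesis $q_1\le q_2$ (calling it ``non-degeneracy''), but the theorem makes no such assumption. The only ordering hypothesis is $r_1\le r_2$. Your inclusion
\[
L^q_tL^r_x\hookrightarrow W(L^{q_1},L^{q_2})_tW(L^{r_1},L^{r_2})_x
\]
requires $q_1\le q\le q_2$, so it is vacuous when $q_1>q_2$. Concretely, take $d\ge3$, $(q_1,r_1)=(\infty,2)$ and $(q_2,r_2)=(2,2d/(d-2))$: both lie on the admissible line, all hypotheses of the theorem are met, yet no $q$ satisfies $\infty\le q\le 2$, and your ``feasible $r$-interval'' is empty. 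The paper itself notes, immediately after the statement, that the case $q_1\le q_2$ is the trivial one that follows exactly by your embedding argument, and that the substance of the theorem lies in the range $q_1>q_2$.

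The paper's route to the nontrivial range is different in kind: one first proves the theorem in the extreme case $q_1=\tilde q_1=\infty$, $r_1=\tilde r_1=2$, i.e.\ with target space $W(L^\infty,L^{q_2})_tW(L^2,L^{r_2})_x$. This rests on the fixed-time estimate \eqref{est2i}, whose bound $(1+t^2)^{-\frac d2(\frac12-\frac1r)}$ is \emph{nonsingular} at $t=0$; that regularity at the origin is precisely what permits putting the time variable in $W(L^\infty,L^{q_2})$ rather than merely in $L^{q_2}$. One then complex-interpolates (Lemma~\ref{WA}(iii)) between this endpoint and the classical Strichartz estimates \eqref{strom}, \eqref{strdualom}, \eqref{strnonom} to reach the full range of $(q_1,r_1,q_2,r_2)$. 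No embedding argument alone can produce the cases $q_1>q_2$, because those spaces are strictly larger than any $L^q_tL^r_x$.
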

This outcome is achieved by first establishing the estimates for the
particular case $q_1=\tilde{q_1}=\infty,\,r_1=\tilde{r}_1=2$, and
then by complex interpolation with the classical ones
\eqref{strom}.

 Figure 1 illustrates the range of exponents for the
homogeneous estimates when $d\geq3$. Notice that, if $q_1\leq
q_2$, these estimates follow immediately from \eqref{S1W} and the
inclusion relations of Wiener amalgam spaces. So, the issue
consists in the cases $q_1>q_2$.  Since there are no relations
between the pairs $(q_1,r_1)$ and $(q_2,r_2)$ other than $r_1\leq
r_2$, these estimates tell us, in a sense, that the analysis of
the local regularity of the Schr\"odinger propagator is quite
independent of its decay at infinity.\par

         
  \vspace{1.2cm}

In \cite{cordero3} it is proved that, for $d\geq3$, all the
constraints on the range of exponents in Theorem \ref{prima2i} are
necessary, except for $r_1\leq r_2$, $r_1\leq 2d/(d-2)$, which is still left open. However,  the following weaker result holds
\cite[Proposition 5.3]{cordero3}:

\medskip \emph{Assume $r_1>r_2$ and $t\not=0$. Then the propagator $e^{it\Delta}$ does not map $W(L^{r'_1},L^{r'_2})$ continuously
into $W(L^{r_1},L^{r_2})$.}
\medskip

\noindent shows that the estimates \eqref{hom2i} for exponents
$r_1>r_2$, if true, cannot be obtained from fixed-time estimates
and orthogonality arguments. The arguments employed for the
necessary conditions differ from the classical setting of Lebesgue
spaces, because the general scaling consideration does not work
directly. Indeed, the known bounds for the norm of the dilation
operator $f(x)\longmapsto f(\lambda x)$ between  Wiener amalgam
spaces (\cite{sugimototomita,Toft04}), yield constraints which are
weaker than the desired ones. So, the necessary conditions are
obtained  considering  families of rescaled Gaussians as initial
data, for which the action of the operator $e^{it\Delta}$ and the
involved norms can be computed explicitly, see \cite{cordero3}.

We end up this section with recalling Stricharz estimates for
modulation spaces. The main result in this framework is due to
Wang and Hudzik \cite{baoxiang2}. They use the same arguments as in
Keel and Tao \cite{keel}, who point out that the ranges of
exponents $(q,r)$ in \eqref{displp} could most likely be not
optimal. In fact, Keel and Tao  show that if the semigroup
$e^{it\Delta}$ satisfies the estimate
\begin{equation}\label{dispconfronto}
\|e^{it\Delta}u_0\|_{L^p}\lesssim (1+|t|)^{-d(1/2-1/p)}\|u_0\|_{L^{p'}}
\end{equation}
then \eqref{strom}, \eqref{strdualom} and \eqref{strnonom} hold if
one substitutes $q$ and $\tilde{q}$ by any $\gamma\geq \max(q, 2)$
and $\tilde{\gamma}\geqslant\max(\tilde{q},2)$, respectively.
 Since the estimate \eqref{dispmpq} is similar to \eqref{dispconfronto}, they optimize \eqref{strom}, \eqref{strdualom}
  and \eqref{strnonom} in  the function spaces $M_s^{p,q}$ to cover the exponents $(\gamma,q)$ and $(\tilde{\gamma},\tilde{q})$
  satisfying $\gamma\geqslant \max(q, 2)$ and $\tilde{\gamma}\geqslant\max(\tilde{q}, 2)$. Since the precise formulation of these results requires
  the introduction of other function spaces, we refer interested readers to  \cite[Section
  3]{baoxiang2}.

\section{Applications}

We start by focusing  on the Cauchy problem for the nonlinear
Schr\"odinger equation (NLS)
\begin{equation}\label{nl}
\begin{cases}
i\partial_t u+\Delta u+N(u)=0\\
u(0,x)=u_0(x).
\end{cases}
\end{equation}
The nonlinearity $N$ considered will be either power-like \[
p_k(u)=\lambda|u|^{2k}u, \quad k\in\mathbb{N},\,\lambda\in \R
\]
or exponential-like
\[
e_\rho(u)=\lambda(e^{\rho|u|^2}-1)u,\quad\lambda,\rho\in \R.
\]
Both nonlinearities are smooth. The corresponding equations having
power-like nonlinearities $p_k$ are sometimes referred to as
algebraic nonlinear Schr\"odinger equations. The sign of the
coefficient $\lambda$ determines the defocusing, absent, or
focusing character of the nonlinearity.

We shall study the well-posedness of \eqref{nl}, in different
spaces. Recall that the problem \eqref{nl} is locally well-posed
in e.g. $H^s_x(\rd)$ if, for any $u_0^*\in H^s_x(\rd)$, there
exists a time $T>0$ and an open ball $B$ in $H^s_x(\rd)$
containing $u_0^*$ and a subset of $C^0_tH^s_x([T,T]\times \rd)$
such that for each $u_0\in B$ there exists a unique solution $u\in
X$ to the equation \eqref{nl} and the map $u_0\mapsto u$ is
continuous from $B$ (with the $H^s_x$ topology)  to $X$ (with the
$C^0_tH^s_x([T,T]\times \rd)$ topology).

A fundamental  tool in well-posedness theory
 is the \emph{contraction theorem}.
 Let us first work abstractly, viewing \eqref{nl} as an instance of the more general
\begin{equation}\label{eqforint}
u=u_{\text{lin}}+DN(u)
\end{equation}
where $u_{\text{lin}}:=e^{it\Delta}u_0$ is the linear solution,
$N$ is the nonlinearity  and $D$ is the Duhamel operator
\[
DF(t,x):=\int_0^te^{i(t-s)\Delta}F(s,\cdot)ds.
\]
The following abstract tool \cite[Proposition 1.38]{tao} then
allows us to find the desired contraction map.

\begin{proposition}[Abstract iteration argument]\label{propiter}
Let $\cN,\, \mathcal{\Tau}$ be two Banach spaces. Let $D:\cN\to \mathcal{\Tau}$ be a
bounded linear operator with the bound
\begin{equation}\label{eqlimlin}
||DF||_ {\mathcal{\Tau}}\leqslant C_0||F||_\cN
\end{equation}
for all $F\in\cN$ and some constant $C_0>0$, and let
$N:\cS\to\cN$, with $N(0)=0$, be a nonlinear operator which is
Lipschitz continuous and obeys the bounds
\begin{equation}\label{eqlimnonlin}
||N(u)-N(v)||_{\cN}\leqslant\frac{1}{2C_0}||u-v||_{\mathcal{\Tau}}
\end{equation}
for all $u,v$ in the ball
$B_\epsilon:=\{u\in\cS:||u||_{\mathcal{\Tau}}\leqslant\epsilon\}$, for some
$\epsilon>0$. Then, for all $u_\text{lin}\in B_{\epsilon/2}$,
there exists a unique solution $u\in B_\epsilon$ to the equation
(\ref{eqforint}), with Lipschitz map $u_\text{lin}\mapsto u$ with
 constant at most  2. That is, we have
\begin{equation}\label{eqlipnonlin}
||u||_{\mathcal{\Tau}}\leqslant 2||u_\text{lin}||_{\mathcal{\Tau}}
\end{equation}
\begin{proof}
Observe that for  $v=0$ the estimate (\ref{eqlimnonlin}) becomes
\begin{equation}\label{eqlimnonlindim}
||N(u)||_{\cN}\leqslant\frac{1}{2C_0}||u||_{\mathcal{\Tau}}
\end{equation}
(since  $N(0)=0$ by hypothesis). Then, fix $u_\text{lin}\in
B_{\epsilon/2}$, and consider the map
\[
\phi(u):=u_\text{lin}+DN(u).
\]
Using \eqref{eqlimlin} and (\ref{eqlimnonlindim}) one has
\[
||\phi(u)||_{\mathcal{\Tau}}=||u_\text{lin}+DN(u)||_{\mathcal{\Tau}}\leqslant
\frac{\epsilon}{2}+\frac{C_0}{2C_0}\epsilon=\epsilon
\]
for all $u\in B_\epsilon$, i.e., $\phi$ maps the ball $B_\epsilon$
into  $B_\epsilon$. Moreover, $\phi$ is a contraction on
$B_\epsilon$, indeed by (\ref{eqlimlin}) and (\ref{eqlimnonlin})
one has
\[
\begin{split}
||\phi(u)-\phi(v)||_{\mathcal{\Tau}} &= ||DN(u)-DN(v)||_{\mathcal{\Tau}}\leqslant C_0||N(u)-N(v)||_\cN\\
&\leqslant C_0\frac{1}{2C_0}||u-v||_{\mathcal{\Tau}}=\frac{1}{2}||u-v||_{\mathcal{\Tau}},
\end{split}
\]
for all $u,v\in B_\epsilon$. Then, the contraction theorem asserts
that there exists a unique  fixed point $u$ for $\phi$ and
moreover the map $u_\text{lin}\mapsto u$ is Lipschitz with
constant at most 2, that is  (\ref{eqlipnonlin}).
\end{proof}
\end{proposition}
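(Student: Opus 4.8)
The plan is to realize the solution of \eqref{eqforint} as a fixed point of the affine-nonlinear map $\phi(u):=u_{\text{lin}}+DN(u)$ and to apply the Banach fixed point theorem on the closed ball $B_\epsilon=\{u\in\cS:\|u\|_{\mathcal{\Tau}}\leqslant\epsilon\}$, viewed as a complete metric space with the distance inherited from $\mathcal{\Tau}$. A fixed point of $\phi$ is by definition a solution of \eqref{eqforint}, so the whole argument reduces to verifying the two hypotheses of the fixed point theorem: that $\phi$ maps $B_\epsilon$ into itself, and that $\phi$ is a strict contraction on $B_\epsilon$.

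The first thing I would record is the pointwise consequence of \eqref{eqlimnonlin} obtained by setting $v=0$: since $N(0)=0$, one gets $\|N(u)\|_{\cN}\leqslant\frac{1}{2C_0}\|u\|_{\mathcal{\Tau}}$ for every $u\in B_\epsilon$. Combining this with the operator bound \eqref{eqlimlin} for $D$ and with $\|u_{\text{lin}}\|_{\mathcal{\Tau}}\leqslant\epsilon/2$ gives, for $u\in B_\epsilon$,
\[
\|\phi(u)\|_{\mathcal{\Tau}}\leqslant\|u_{\text{lin}}\|_{\mathcal{\Tau}}+C_0\|N(u)\|_{\cN}\leqslant\frac{\epsilon}{2}+\frac{C_0}{2C_0}\,\epsilon=\epsilon,
\]
so $\phi(B_\epsilon)\subseteq B_\epsilon$. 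For the contraction property the linear term cancels in the difference, and \eqref{eqlimlin} followed by \eqref{eqlimnonlin} yields
\[
\|\phi(u)-\phi(v)\|_{\mathcal{\Tau}}=\|D(N(u)-N(v))\|_{\mathcal{\Tau}}\leqslant C_0\|N(u)-N(v)\|_{\cN}\leqslant\tfrac12\|u-v\|_{\mathcal{\Tau}}
\]
for all $u,v\in B_\epsilon$; the Lipschitz constant $1/2$ is exactly what the normalization $1/(2C_0)$ in \eqref{eqlimnonlin} is engineered to produce. Hence $\phi$ admits a unique fixed point $u\in B_\epsilon$, which is the desired solution.

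For the Lipschitz dependence on the data I would rerun the same estimate on the difference of two fixed points $u,\tilde u\in B_\epsilon$ associated with $u_{\text{lin}},\tilde u_{\text{lin}}\in B_{\epsilon/2}$: from $u-\tilde u=(u_{\text{lin}}-\tilde u_{\text{lin}})+D(N(u)-N(\tilde u))$ and the contraction bound one gets $\|u-\tilde u\|_{\mathcal{\Tau}}\leqslant\|u_{\text{lin}}-\tilde u_{\text{lin}}\|_{\mathcal{\Tau}}+\tfrac12\|u-\tilde u\|_{\mathcal{\Tau}}$, hence $\|u-\tilde u\|_{\mathcal{\Tau}}\leqslant2\|u_{\text{lin}}-\tilde u_{\text{lin}}\|_{\mathcal{\Tau}}$; specializing $\tilde u_{\text{lin}}=0$ (so that $\tilde u=0$) gives \eqref{eqlipnonlin}. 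I do not expect any genuinely hard step here, since this is the textbook contraction-mapping scheme; the only point meriting a line of care is that $B_\epsilon$ must be a \emph{closed} subset of $\mathcal{\Tau}$ so that it is complete and the fixed point theorem applies, which is guaranteed by the way the spaces $\cS\subset\mathcal{\Tau}$ are chosen in the concrete applications of the proposition.
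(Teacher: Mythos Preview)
Your proof is correct and follows essentially the same approach as the paper: define $\phi(u)=u_{\text{lin}}+DN(u)$, use $N(0)=0$ in \eqref{eqlimnonlin} to control $\|N(u)\|_{\cN}$, verify that $\phi$ maps $B_\epsilon$ to itself and is a $\tfrac12$-contraction, and invoke the Banach fixed point theorem. You add a little more detail than the paper in explicitly deriving the Lipschitz bound $\|u-\tilde u\|_{\mathcal{\Tau}}\leqslant2\|u_{\text{lin}}-\tilde u_{\text{lin}}\|_{\mathcal{\Tau}}$ and specializing to $\tilde u_{\text{lin}}=0$, whereas the paper simply asserts this consequence of the contraction theorem.
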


Proposition \ref{propiter} is the main ingredient of the results in \cite{benyi, cordero2, cordero3, Dancona05,
tao, baoxiang2}.

First, consider the NLS  \eqref{nl} with $N=p_k$, with the initial
data $u_0$  in the Sobolev space $H_x^s(\R^d)$. To study this
Cauchy problem it is convenient to introduce a single space $S^s$
that recaptures  all  the Strichartz norms at a certain regularity
$H_x^s(\R^d)$ simultaneously. For sake of simplicity, we reduce to
the case $s=0$ which corresponds to the case  $L_x^2$, introducing
the \emph{Strichartz space} $S^0(I\times\rd)$, for any time
interval $I$, defined as the closure of Schwartz class $\cS$ with
respect to the norm
\[
||u||_{\cS^0(I\times\R^d)}:=\sup_{A}||u||_{L_t^q L_x^r(I\times\R^d)},
\]
where the set $A$ is given by  $A:=\{(\infty,2), (q,r)\}$, with
$(q,r)$ Schr\"odinger admissible. We define also the space
$N^0(I\times\R^d):=L_t^{q'}L_x^{r'}$. Then, using Proposition
\ref{propiter} and the  $L^p$ Strichartz estimates of Theorem
\ref{teostrichartzlp} one can prove the following
\cite[Proposition 3.15]{tao}

\begin{theorem}[$L_x^2$ subcritical solution]
Let $k$ be subcritical for $L_x^2$ (that is, $0<k<\frac{2}{d}$)
and let $\mu=\pm 1$. Then the NLS \eqref{nl} is locally well-posed
in $L_x^2$ in a subcritical sense. Indeed, for any $R>0$ there
exists a time $T>0$ such that for all $u_0$ in the ball
$B_R:=\{u_0\in L_x^2(\rd):||u_0||_{L_x^2}<R\}$ there exists a
unique solution $u$ in $L_x^2$ of \eqref{nl} in the space
$\cS^0([-T,T]\times\rd)\subset C_t^0 L_x^2([-T,T]\times\R^d)$.
Moreover, the map $u_0\mapsto u$, from $B_R$ to
$\cS^0([-T,T]\times\R^d)$, is Lipschitz continuous.
\end{theorem}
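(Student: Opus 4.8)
The plan is to recast \eqref{nl} with $N=p_k$ as an instance of the abstract equation \eqref{eqforint} and to invoke Proposition \ref{propiter}, with the Banach spaces $S^0(I\times\rd)$ and $N^0(I\times\rd)$ (in the roles of $\Tau$ and $\cN$ respectively), on a short interval $I=[-T,T]$ with $T=T(R)>0$ to be chosen. The first step is to fix a Schr\"odinger admissible pair tuned to the nonlinearity: take $r=2(k+1)$ and, forced by admissibility, $q=4(k+1)/(dk)$. The subcriticality hypothesis $0<k<2/d$ is exactly what makes $(q,r)$ admissible (in particular $q>2$, and $2\le r\le 2d/(d-2)$ when $d\ge3$, and $(q,r,d)\neq(2,\infty,2)$) \emph{and} forces the strict inequality $q>r$, which below produces a positive power of $|I|$. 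With this choice, $S^0$ controls both $L^\infty_tL^2_x$ and $L^q_tL^r_x$, while $N^0=L^{q'}_tL^{r'}_x$.

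Next I would verify the two hypotheses of Proposition \ref{propiter}. For \eqref{eqlimlin}: the Duhamel operator $D$ maps $N^0\to S^0$ boundedly, since its $L^q_tL^r_x$ component is controlled by the retarded estimate \eqref{strnonom} of Theorem \ref{teostrichartzlp} applied with $(\tilde q,\tilde r)=(q,r)$, and its $L^\infty_tL^2_x$ component by \eqref{s1}; denote by $C_0$ the resulting operator norm. For \eqref{eqlimnonlin}: start from the pointwise bound $|p_k(u)-p_k(v)|\lesssim(|u|^{2k}+|v|^{2k})|u-v|$, valid because $z\mapsto|z|^{2k}z$ is $C^1$ with derivative $O(|z|^{2k})$. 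H\"older in $x$ with $1/r'=(2k+1)/r$ (which holds precisely for $r=2(k+1)$) gives $\|p_k(u)-p_k(v)\|_{L^{r'}_x}\lesssim(\|u\|_{L^r_x}^{2k}+\|v\|_{L^r_x}^{2k})\|u-v\|_{L^r_x}$, and then H\"older in $t$ on the bounded interval $I$ with surplus exponent $1/\rho=1-(2k+2)/q=1-dk/2>0$ yields
\[
\|p_k(u)-p_k(v)\|_{N^0}\lesssim |I|^{\,1-dk/2}\,\bigl(\|u\|_{S^0}^{2k}+\|v\|_{S^0}^{2k}\bigr)\,\|u-v\|_{S^0}.
\]

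With these in hand I would close the argument by bookkeeping of constants. By the homogeneous estimates \eqref{strom} and \eqref{s1}, $\|u_{\mathrm{lin}}\|_{S^0([-T,T]\times\rd)}\le\|u_{\mathrm{lin}}\|_{S^0(\R\times\rd)}\lesssim\|u_0\|_{L^2_x}<CR$ uniformly in $T$, so setting $\epsilon:=2CR$ places $u_{\mathrm{lin}}$ in $B_{\epsilon/2}$. Restricted to $B_\epsilon$, the nonlinear estimate above has Lipschitz constant $\le C'T^{\,1-dk/2}\epsilon^{2k}$, which is made $\le1/(2C_0)$ by taking $T=T(R)>0$ small (here positivity of $1-dk/2$ is essential). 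Proposition \ref{propiter} then gives, for each $u_0\in B_R$, a unique $u\in B_\epsilon\subset S^0([-T,T]\times\rd)$ solving \eqref{eqforint}, hence \eqref{nl}, together with \eqref{eqlipnonlin} and Lipschitz dependence $u_{\mathrm{lin}}\mapsto u$; composing with the linear (hence Lipschitz) map $u_0\mapsto u_{\mathrm{lin}}=e^{it\Delta}u_0$ yields Lipschitz continuity of $u_0\mapsto u$ from $B_R$ to $S^0([-T,T]\times\rd)$. Finally $S^0([-T,T]\times\rd)\subset C^0_tL^2_x([-T,T]\times\rd)$ follows because $S^0$ is the closure of $\cS$ in a norm dominating $\|\cdot\|_{L^\infty_tL^2_x}$, and $C^0_tL^2_x$ is closed in $L^\infty_tL^2_x$.

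The only delicate point is the matching of the H\"older exponents in space and time together with the emergence of a \emph{positive} power of $|I|$: this is precisely where $k<2/d$ enters, and without it the contraction cannot be recovered by shrinking the time interval (one would instead need smallness of the data, i.e. a critical rather than subcritical statement). A secondary technical caveat is that ``uniqueness of $u$ in $L^2_x$'' must be read as uniqueness within the Strichartz space $S^0$ (equivalently, in the ball $B_\epsilon$); unconditional uniqueness in $C^0_tL^2_x$ is not part of the claim.
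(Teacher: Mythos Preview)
Your proposal is correct and is precisely the approach the paper indicates: the paper does not write out a proof but states that the result follows from Proposition~\ref{propiter} combined with the Strichartz estimates of Theorem~\ref{teostrichartzlp}, referring the reader to \cite[Proposition~3.15]{tao}, and your argument is exactly that standard route (choice of the admissible pair $r=2(k+1)$, $q=4(k+1)/(dk)$, H\"older in space and time producing the factor $T^{1-dk/2}$, contraction via the abstract iteration). One trivial slip: in the sentence bounding $\|u_{\mathrm{lin}}\|_{S^0}$ you cite \eqref{s1}, but the $L^\infty_tL^2_x$ component of $u_{\mathrm{lin}}$ comes from the conservation law \eqref{l2}, not from \eqref{s1} (which concerns the Duhamel term); this does not affect the argument.
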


For results in the framework of modulation spaces we address to
\cite{benyi3,baoxiang, baoxiang2}.   In particular, we examine
\cite{benyi3}. The main result, obtained only with the $M^{p,q}_s$ dispersive estimates \eqref{dispmpq1}, is the following.
\begin{theorem}
Assume that $u_0\in M^{p,1}_s(\R^d)$ and $N\in\{p_k, e_\rho\}$.
Then, there exists $T=T(\|u_0\|_{M^{p,1}_s})$ such that \eqref{nl}
has a unique solution $u\in C^0M_s^{p,1}([0,T]\times\R^d)$.
Moreover, if $T<\infty$, then $\limsup_{t\to
T}\|u(t,\cdot)\|=\infty$.
\begin{proof}
The proof is simply an application of the abstract iteration
argument. Let us write it for the nonlinearity $N=p_k$. We choose
the spaces ${\mathcal{\Tau}}:= C^0M_s^{p,1}([0,T]\times\R),\,\cN:=M_s^{p,1}$,
and the Duhamel operator
\[
D:=\int_0^t e^{i(t-s)\Delta}\cdot ds.\]
Then, it is sufficient to prove \eqref{eqlimlin} and
\eqref{eqlimnonlin}  in this setting. Then, by the Minkowsky integral inequality, $M^{p,q}_s$ dispersive estimates \eqref{dispmpq1} and \cite[Corollary 2]{benyi3} one has 
\[
\begin{split}
\Big\|\int_0^t e^{i(t-\tau)\Delta}(p_k(u))(\tau) d\tau\Big\|_{M_s^{p,1}}& \leqslant \int_0^t \| e^{i(t-\tau)\Delta}(p_k(u))(\tau)\|_{M_s^{p,1}}
 d\tau \\
& \leqslant c_1TC_T \sup_{t\in[0,T]}\|p_k(u)(t)\|_{M_s^{p,1}}\\
& \leqslant c_1c_2C_T T \|u(t)\|_{M_s^{p,1}}^{2k+1}
\end{split}
\]
where $C_T=\sup_{t\in[0,T)}(1+|t|)^{d/2}$. Choosing $T>0$ such that $c_1c_2C_T T\leqslant C_0$, it follows \eqref{eqlimlin} and by
\[
p_k(u)(\tau)-p_k(v)(\tau)=\lambda(u-v)|u|^{2k}(\tau)+\lambda
v(|u|^{2k}-|v|^{2k})(\tau),
\]
it follows \eqref{eqlimnonlin}.
\end{proof}
\end{theorem}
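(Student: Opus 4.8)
The plan is to apply the abstract iteration argument (Proposition~\ref{propiter}) with the two Banach spaces $\mathcal{T}:=C^0M_s^{p,1}([0,T]\times\R^d)$ and $\mathcal{N}:=M_s^{p,1}(\R^d)$, and with $D$ the Duhamel operator $DF(t,\cdot)=\int_0^t e^{i(t-\tau)\Delta}F(\tau,\cdot)\,d\tau$. Since the hypotheses of that proposition are exactly the linear bound \eqref{eqlimlin} and the nonlinear Lipschitz bound \eqref{eqlimnonlin}, the whole proof reduces to verifying those two estimates for the present choice of spaces and nonlinearity $N\in\{p_k,e_\rho\}$; once they hold, Proposition~\ref{propiter} produces the unique local solution in $B_\epsilon\subset\mathcal{T}$, and the blow-up alternative follows by the usual continuation argument (if the maximal existence time $T$ were finite with $\limsup_{t\to T}\|u(t,\cdot)\|_{M_s^{p,1}}<\infty$, one could restart the iteration from a time close to $T$ and extend the solution, a contradiction).

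First I would establish \eqref{eqlimlin}. Starting from $\|DF(t,\cdot)\|_{M_s^{p,1}}\le\int_0^t\|e^{i(t-\tau)\Delta}F(\tau,\cdot)\|_{M_s^{p,1}}\,d\tau$ by Minkowski's integral inequality, I apply the fixed-time modulation space estimate \eqref{dispmpq1}, which gives a factor $(1+|t-\tau|)^{d/2}\le C_T:=\sup_{t\in[0,T)}(1+|t|)^{d/2}$; integrating over $\tau\in[0,t]$ and taking the sup over $t\in[0,T]$ yields
\[
\Big\|\int_0^t e^{i(t-\tau)\Delta}N(u)(\tau)\,d\tau\Big\|_{C^0M_s^{p,1}}\le c_1\,T\,C_T\sup_{t\in[0,T]}\|N(u)(t)\|_{M_s^{p,1}}.
\]
Then, for $N=p_k$, one uses the algebra-type estimate (the analogue of \cite[Corollary~2]{benyi3}) $\|p_k(u)(t)\|_{M_s^{p,1}}\le c_2\|u(t)\|_{M_s^{p,1}}^{2k+1}$, so that the right-hand side is bounded by $c_1c_2\,T\,C_T\sup_t\|u(t)\|_{M_s^{p,1}}\cdot\sup_t\|u(t)\|_{M_s^{p,1}}^{2k}$; choosing $T>0$ small enough that $c_1c_2\,T\,C_T\,\epsilon^{2k}\le C_0$ (with $\epsilon$ the ball radius to be fixed) gives \eqref{eqlimlin}. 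For the exponential nonlinearity $e_\rho$ one expands $e^{\rho|u|^2}-1=\sum_{k\ge1}\rho^k|u|^{2k}/k!$ and sums the corresponding multilinear estimates, which converges on the ball since $M_s^{p,1}$ is a Banach algebra under pointwise multiplication for $s$ large enough (here $s\ge0$ suffices by the hypothesis on the index, cf. \cite{benyi3}).

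For \eqref{eqlimnonlin} I would argue similarly on the difference: using the pointwise identity $p_k(u)-p_k(v)=\lambda(u-v)|u|^{2k}+\lambda v(|u|^{2k}-|v|^{2k})$ (and telescoping $|u|^{2k}-|v|^{2k}$ into a sum of terms each containing a factor $u-v$ or $\bar u-\bar v$), the Banach-algebra property of $M_s^{p,1}$ bounds $\|p_k(u)-p_k(v)\|_{M_s^{p,1}}$ by $C\big(\|u\|_{M_s^{p,1}}^{2k}+\|v\|_{M_s^{p,1}}^{2k}\big)\|u-v\|_{M_s^{p,1}}$, hence by $2C\epsilon^{2k}\|u-v\|_{\mathcal{T}}$ on $B_\epsilon$; shrinking $\epsilon$ (or $T$, which controls $C_T$ and thus the effective constant) makes this $\le\frac1{2C_0}\|u-v\|_{\mathcal{T}}$, which is \eqref{eqlimnonlin}. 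The exponential case is handled again by summing the series. The main obstacle is not conceptual but bookkeeping: one must choose $\epsilon$ and $T$ in the right order so that the single time $T=T(\|u_0\|_{M_s^{p,1}})$ simultaneously makes $D$ bounded on $\mathcal{T}$ \emph{and} makes the nonlinear term a contraction on a ball large enough to contain $u_{\mathrm{lin}}$, i.e. $\|u_0\|_{M_s^{p,1}}\le\epsilon/2$ after using the fixed-time bound \eqref{dispmpq1} for $u_{\mathrm{lin}}=e^{it\Delta}u_0$; and in the exponential case one must check the convergence of the series of multilinear bounds is uniform on $B_\epsilon$, which forces $\epsilon$ below the radius of convergence dictated by the algebra constant of $M_s^{p,1}$.
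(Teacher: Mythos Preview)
Your proposal is correct and follows essentially the same route as the paper: the abstract iteration argument of Proposition~\ref{propiter} applied with $\mathcal{T}=C^0M_s^{p,1}([0,T]\times\R^d)$, the Duhamel operator bounded via Minkowski's inequality together with the fixed-time estimate \eqref{dispmpq1}, and the nonlinear estimate obtained from the Banach-algebra property of $M_s^{p,1}$ (as in \cite[Corollary~2]{benyi3}) together with the factorization $p_k(u)-p_k(v)=\lambda(u-v)|u|^{2k}+\lambda v(|u|^{2k}-|v|^{2k})$. Your write-up is in fact more complete than the paper's sketch: you spell out the telescoping needed for the Lipschitz bound, the series argument for $e_\rho$, the continuation argument behind the blow-up alternative, and the correct dependence of the smallness condition on $\epsilon^{2k}$ rather than just on $T$.
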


For  Wiener amalgam spaces there are no results for the NLS. In
\cite{cordero3} there is a result  concerning linear Schr\"odinger
equations with time-dependent potentials. Indeed, in
\cite{cordero3} the well-posedness is proved in $L^2$ of the
following Cauchy problem, for all $d\geq 1$,
\begin{equation}\label{cpP}
\begin{cases}
i\partial_t u+\Delta u=V(t,x)u,\quad t\in [0,T]=I_T, \,\,x\in\rd,\\
u(0,x)=u_0(x),
\end{cases}
\end{equation}
and for the class of potentials
\begin{equation}\label{pot3}
V\in L^\alpha(I_T;W(\cF L^{p^\prime},
L^p)_x),\quad\frac1{\a}+\frac{d}{p}\leq1,\ 1\leq\alpha<\infty.\
{d}< p\leq\infty.
\end{equation}

\begin{theorem}\label{tepot}
Consider the class of
potentials \eqref{pot3}.
Then, for all $(q,r)$ such that $2/q+d/r=d/2$, $q>4,r\geq2$, the Cauchy problem \eqref{cpP} has a
 unique solution
\begin{flalign*}
\text{(i)\;} u\in\mathcal{C}(I_T;L^2(\bR))\cap &\, L^{q/2}(I_T; W(\cF L^{r^\prime}, L^r)), \,\text{ if }d=1;\\
\text{(ii)\;} u\in\mathcal{C}(I_T;\lrd) &\cap L^{q/2}(I_T; W(\cF L^{r^\prime}, L^r))\, \cap \\
 &\cap L^{2}(I_T; W(\cF L^{2d/(d+1),2}, L^{2d/(d-1)})), \text{if }d>1.
\end{flalign*}

\end{theorem}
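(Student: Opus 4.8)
The plan is a contraction argument on the Duhamel reformulation of \eqref{cpP},
\[
u=\Phi(u),\qquad \Phi(u)(t):=e^{it\Delta}u_0-i\int_0^t e^{i(t-s)\Delta}V(s)u(s)\,ds,
\]
run inside a Wiener amalgam Strichartz space and fed by the estimates \eqref{necessarie} of \cite{cordero} (together with their dual and retarded versions, and the $q=4$ Lorentz-refined version quoted right after \eqref{necessarie}) and by the pointwise product estimate \cite[Lemma 6.1]{cordero3}. The starting observation is that the hypotheses on $V$ in \eqref{pot3} single out a \emph{distinguished Schr\"odinger-admissible pair} $(q_0,r_0)$: put $q_0:=4p/d$ and let $r_0=2p'$, so that $\tfrac2{q_0}+\tfrac d{r_0}=\tfrac d2$, and a one-line computation gives the two identities $\tfrac1p=\tfrac1{r_0'}-\tfrac1{r_0}$ and $\tfrac dp=\tfrac4{q_0}$. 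The first identity is exactly the balance condition needed to apply \cite[Lemma 6.1]{cordero3}, which then yields, for a.e.\ $t$,
\[
\|V(t,\cdot)\,w(t,\cdot)\|_{W(\cF L^{r_0},L^{r_0'})}\lesssim\|V(t,\cdot)\|_{W(\cF L^{p'},L^p)}\,\|w(t,\cdot)\|_{W(\cF L^{r_0'},L^{r_0})};
\]
moreover $d<p\le\infty$ translates into $4<q_0\le\infty$, and $\tfrac1\alpha+\tfrac dp\le1$ becomes $\tfrac1\alpha+\tfrac2{q_0}\le1-\tfrac2{q_0}=\tfrac1{(q_0/2)'}$, i.e.\ precisely the condition under which $L^\gamma(I_T)\hookrightarrow L^{(q_0/2)'}(I_T)$ for $\tfrac1\gamma=\tfrac1\alpha+\tfrac2{q_0}$, with embedding constant $T^{\theta}$, $\theta:=1-\tfrac dp-\tfrac1\alpha\ge0$.

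I would take as resolution space
\[
\mathcal{T}:=\mathcal{C}(I_T;L^2)\,\cap\!\!\bigcap_{(q,r)}\!\!L^{q/2}\big(I_T;W(\cF L^{r'},L^r)\big),
\]
the intersection over all Schr\"odinger-admissible $(q,r)$ with $q>4$ (for $d>1$ one further intersects with $L^2(I_T;W(\cF L^{2d/(d+1),2},L^{2d/(d-1)}))$, the $q=4$, Lorentz-refined member of the family \eqref{necessarie}; this space degenerates at $d=1$, which is why assertion (i) omits it). Note that the pair $(q_0,r_0)$ occurs among the pairs defining $\mathcal{T}$ (since $q_0>4$), so $\mathcal{T}\hookrightarrow L^{q_0/2}(I_T;W(\cF L^{r_0'},L^{r_0}))$. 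For $u_0\in L^2$, the linear part $e^{it\Delta}u_0$ lies in $\mathcal{T}$ by \eqref{necessarie} and its $q=4$ Lorentz version. For the Duhamel part of $\Phi(u)-\Phi(v)$ one chains: (a) for each output pair $(q,r)$ with $q>4$ (and for the $q=4$ Lorentz output when $d>1$), the retarded and dual-homogeneous Strichartz estimates attached to \eqref{necessarie}, used \emph{non-diagonally} with input pair frozen at $(q_0,r_0)$ (this uses $q_0>4$ and $q>4$ so that the underlying Christ--Kiselev truncation step is admissible; the $\mathcal{C}(I_T;L^2)$ component is handled by the dual homogeneous estimate together with Lemma~\ref{lemmaoptronc}), which bounds the $\mathcal{T}$-norm by $\|V(u-v)\|_{L^{(q_0/2)'}(I_T;W(\cF L^{r_0},L^{r_0'}))}$; (b) H\"older in time, producing the factor $T^{\theta}\|V\|_{L^\alpha(I_T;W(\cF L^{p'},L^p))}$; (c) the product estimate of Step~1 inside the time integral. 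Altogether
\[
\|\Phi(u)-\Phi(v)\|_{\mathcal{T}}\le C\,T^{\theta}\,\|V\|_{L^\alpha(I_T;W(\cF L^{p'},L^p))}\,\|u-v\|_{\mathcal{T}}.
\]
When $\theta>0$ this constant is $<\tfrac12$ for $T$ small; when $\theta=0$ (the borderline $\tfrac1\alpha+\tfrac dp=1$) one uses instead that $\|V\|_{L^\alpha([0,\tau];\cdot)}\to0$ as $\tau\to0$, which is where $\alpha<\infty$ enters. So $\Phi$ is a contraction on $\mathcal{T}$ for $T$ small (equivalently, Proposition~\ref{propiter} applies with the linear $N(u)=-iVu$, for which $N(0)=0$ and the Lipschitz bound \eqref{eqlimnonlin} is exactly the displayed estimate), providing the unique fixed point on a short interval; since $\alpha<\infty$, $I_T$ decomposes into finitely many subintervals on each of which $\|V\|_{L^\alpha}$ is that small, and gluing the solutions (at each node the previous value is the new datum, still in $L^2$ by the a priori bound the same estimates yield) gives the solution on all of $I_T$, automatically in every space listed in (i)/(ii).

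The step I expect to be the genuine obstacle is not any individual estimate but making the exponents fit: one must recognise that $\tfrac1p=\tfrac1{r_0'}-\tfrac1{r_0}$ (imposed by the product estimate) and $\tfrac1\alpha+\tfrac dp\le1$ (imposed by the time-H\"older step) are \emph{exactly} the conditions in \eqref{pot3}, which explains the precise shape of that hypothesis; and, for $d>1$, the $q=4$ endpoint of the family \eqref{necessarie} forces the Lorentz refinement $\cF L^{\,\cdot\,,2}$ in the space variable, so step (c) must be run in a Lorentz-refined form of \cite[Lemma 6.1]{cordero3} (or via real interpolation in the second index). Verifying the hypotheses of the non-diagonal retarded estimates for every admissible output pair, and carrying the Lorentz refinement through, is where the real work sits; the contraction, gluing and bootstrap into the various spaces are then routine.
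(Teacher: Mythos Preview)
Your proposal is correct and follows the same overall strategy as the paper --- a contraction on the Duhamel map using the Wiener--amalgam Strichartz family \eqref{necessarie} together with the product lemma \eqref{mconvm} --- but with two tactical differences worth recording. First, the paper disposes of $d\ge2$ in one line: since $p>d\ge2$ forces $\cF L^{p'}\hookrightarrow L^p$ (Hausdorff--Young) and hence $W(\cF L^{p'},L^p)\hookrightarrow L^p$, the class \eqref{pot3} embeds into the $L^\alpha_tL^p_x$ potentials already handled in \cite[Theorem~6.1]{cordero}; this sidesteps the Lorentz endpoint entirely, whereas you carry it through directly for all $d$. Second, for $d=1$ the paper does \emph{not} tie its dual input pair to $p$ as you do: it applies \eqref{mconvm} with the second factor taken in $L^2=W(\cF L^2,L^2)$, i.e.\ it pairs $V$ against $\|v\|_{L^\infty_tL^2_x}$ rather than against a genuine Strichartz norm of $v$, and then closes the contraction using only the $\mathcal{C}(J;L^2)$ component of the solution space together with a single extremal pair $(q_0,r_0)$ chosen with $r_0$ arbitrarily large (all other admissible pairs following by interpolation). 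Your choice $r_0=2p'$, $q_0=4p/d$ is equally valid and makes the origin of the constraints in \eqref{pot3} more transparent; the paper's choice is shorter. One minor correction: your concern that ``step (c) must be run in a Lorentz-refined form'' is misplaced --- since your input pair has $q_0>4$, the product estimate \eqref{mconvm} is used in its plain form; the Lorentz exponent appears only on the \emph{output} side of the retarded Strichartz estimate at $(4,2d/(d-1))$, which is exactly the refinement already quoted after \eqref{necessarie}.
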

\begin{proof}
It is enough to prove the
case $d=1$. Indeed, for
$d\geq 2$, condition
\eqref{pot3} implies $p>2$,
so that  $\cF L^{p^\prime}
\hookrightarrow L^p$ and the
inclusion relations of Wiener
amalgam spaces yield $W(\cF
L^{p^\prime},L^p)\hookrightarrow
W( L^{p},L^p)=L^p$. Hence our
class of potentials is a
subclass of those of
\cite[Theorem 6.1]{cordero},
for which the quoted theorem
provides the desired
result.\par

We now turn to the case
$d=1$. The proof follows the
ones of \cite[Theorem 1.1,
Remark 1.3]{Dancona05} and
\cite[Theorem 6.1]{cordero}
(see also \cite{Yajima87}).
\par First of all, since the
interval $I_T$ is bounded, by
H\"older's inequality and by taking $p$ large, we may assume
$1/\alpha+d/p=1$.\par
 We
choose a small time interval
$J=[0,\delta]$  and set, for
$q\geq2$, $q\not=4$, $r\geq
1$,
\[
Z_{q/2,r}=L^{q/2}(J;W(\cF
L^{r^\prime}, L^r)_x).
\]
Now,  fix  an admissible pair
$(q_0,r_0)$ with $r_0$
arbitrarily large (hence
$(1/q_0,1/r_0)$ is
arbitrarily close to
$(1/4,0)$) and set
$Z=\mathcal{C}(J;L^2)\cap
Z_{q_0/2,r_0}$, with the norm
$\|v\|_Z=\max\{\|v\|_{\mathcal{C}(J;L^2)},\|v\|_{Z_{q_0/2,r_0}}\}$.
We have $Z\subset Z_{q/2,r}$
for all  admissible pairs
$(q,r)$
 obtained
by interpolation between
$(\infty,2)$ and $(q_0,r_0)$.
Hence, by the arbitrary of
$(q_0,r_0)$ it suffices to
prove that $\Phi$ defines a
contraction in $Z$.\par
 Consider now
the integral formulation of
the Cauchy problem, namely
$u=\Phi(v)$, where
\[
\Phi(v)=e^{it\Delta}u_0+\int_0^t
e^{i(t-s)\Delta}V(s)v(s)\,ds.
\]
By the homogeneous and
retarded Strichartz estimates
in \cite[Theorems 1.1,
1.2]{cordero} the following
inequalities hold:
\begin{equation}\label{nan}
\|\Phi(v)\|_{Z_{q/2,r}}\leq
C_0
\|u_0\|_{L^2}+C_0\|Vv\|_{Z_{(\tilde{q}/2)',\tilde{r}'}},
\end{equation}
for all admissible pairs
$(q,r)$ and
$(\tilde{q},\tilde{r})$,
$q>4,\tilde{q}>4$.\par
Consider now the case $1\leq
\alpha< 2$. We choose
$((\tilde{q}/2)',\tilde{r})=(\alpha,2p/(p+2))$.
Since $v\in L^\infty(J; L^2)$, applying \eqref{mconvm} for
$q=2$ we get
$$\|Vv\|_{W(\cF L^{\tilde{r}},L^{\tilde{r}^\prime})}\lesssim \|V\|_{W(\cF
L^{p^\prime},L^{p})}\|v\|_{L^2},$$
whereas H\"older's Inequality
in the time-variable gives
$$\|Vv\|_{Z_{(\tilde{q}/2)',\tilde{r}'}}\lesssim \|V\|_{L^{\alpha}(J;W(\cF
L^{p^\prime},L^{p}))}\|v\|_{L^\infty(J;L^2)}.
$$
The estimate \eqref{nan} then
becomes
\begin{equation}\label{nan2}
\|\Phi(v)\|_{Z_{q/2,r}}\leq
C_0
\|u_0\|_{L^2}+C_0\|V\|_{L^{\alpha}(J;W(\cF
L^{p^\prime},L^{p}))}\|v\|_{L^\infty(J;L^2)}.
\end{equation}
By taking $(q,r)=(\infty,2)$
or    $(q,r)=(q_0,r_0)$ one
deduces that $\Phi:Z\to Z$
(the fact that $\Phi(u)$ is
{\it continuous} in $t$ when
valued in $L^2_x$ follows
from a classical limiting
argument \cite[Theorem 1.1,
Remark 1.3]{Dancona05}).
 Also,  if $J$ is small enough,
$C_0\|V\|_{L^\alpha_t
L^p_x}<1/2$, and $\Phi$ is a
contraction. This gives a
unique solution in $J$. By
iterating this argument a
finite number of times one
obtains a solution in
$[0,T]$.

 \par The case $2\leq \alpha<\infty$ is similar.
\end{proof}

This result generalizes \cite[Theorem 6.1]{cordero}, by
treating the one dimensional case as well and allowing the
potentials to belong to Wiener amalgam spaces with respect to the
space variable $x$. Other results on Schr\"odinger equations with
potentials in $L_t^p L_x^q$ can be found in \cite{Dancona05}.

\vskip0.5truecm


\begin{thebibliography}{10}
\bibitem{benyi} A. B\'enyi, K. Gr\"ochenig, K.A. Okoudjou and L.G. Rogers. Unimodular Fourier multipliers for modulation spaces. {\it J. Funct. Anal.}, 246(2): 366-384, 2007.
\bibitem{benyi3} A. B\'enyi and K.A.
Okoudjou, Local well-posedness of nonlinear dispersive equations on modulation spaces.
 Bulletin of the London Mathematical Society, (2009).
\bibitem{berezinshubin}
F. A. Berezin  and M. A. Shubin,
\newblock {\it {T}he {S}chr\"odinger equation.}
\newblock Mathematics and its Applications (Soviet Series), 66, Kluwer Academic Publishers Group, 1991.
\bibitem{bergh-lofstrom} J.~Bergh and J.~L{\"o}fstr{\"o}m, \newblock {\em Interpolation {S}paces. {A}n {I}ntroduction}. \newblock Springer-Verlag, Berlin, 1976. \newblock Grundlehren der Mathematischen Wissenschaften, No. 223.
\bibitem{cazenave}
T. Cazenave and F.B. Weissler,
\newblock { The Cauchy problem for the critical nonlinear Scr\"odinger equation in $H^s$}.
\newblock {\it Nonlinear Anal.}, 14: 807--836, 1990.
\bibitem{cordero}
E.~Cordero and F. Nicola,
\newblock {S}trichartz
estimates in Wiener amalgam spaces for
the Schr\"odinger equation.
\newblock {\em Math. Nachr.}, 281(1): 25--41, 2008.
\bibitem{cordero2}
E.~Cordero and F. Nicola,
\newblock {Metaplectic representation on Wiener amalgam spaces and applications to the Schr\"odinger equation}.
\newblock {\em J. Funct. Anal.}, 254(2): 506--534, 2008.
\bibitem{cordero3}
E.~Cordero and F. Nicola,
\newblock {S}ome new Strichartz estimates for the Schr\"odinger equation.
\newblock {\em J. Differential Equation}, 245: 1945--1974, 2008.
\bibitem{Dancona05} P. D'Ancona, V. Pierfelice  and N. Visciglia, Some remarks on the {S}chr\"odinger equation with a potential
in {$L\sp r\sb tL\sp s\sb x$}. {\it Math. Ann.}, 333(2): 271--290, 2005.
\bibitem{DAnc} P. D'Ancona, Non Linear Waves, seminary notes, Cortona, Luglio 2007.\par
205(1): 107--131, 2003.
\bibitem{degosson}
M. de Gosson.
\newblock {T}he quantum motion of half-densities and the derivation
of {S}chr\"odinger's equation.
\newblock {\em J. Phys. A: Math. Gen.}, 31: 4239--4247, 1998.
\bibitem{feichtinger80} H.~G. Feichtinger,
\newblock Banach convolution algebras of {W}iener's type,
\newblock In {\em Proc. Conf. ``Function, Series, Operators",
 Budapest August 1980}, Colloq. Math. Soc. J\'anos Bolyai, 35,  509--524, North-Holland, Amsterdam,  1983.
\bibitem{Fei83}
H.~ G.~ Feichtinger,
 {\it Modulation spaces on locally Abelian groups,} Technical Report, University of Vienna, 1983,
 updated version appeared in Proceedings of International Conference on Wavelets and Applications, Chennai,
  India, 2003, pp.\ 99--140.
\bibitem{feichtinger83}
H.~G. Feichtinger,
\newblock Banach spaces of distributions of {W}iener's type and interpolation.
\newblock In {\em Proc. Conf. Functional Analysis and Approximation,
 Oberwolfach August 1980},  Internat. Ser. Numer. Math., 69: 153--165. Birkh\"auser, Boston, 1981.
\bibitem{feichtinger90}
H.~G. Feichtinger,
\newblock Generalized amalgams, with applications to {F}ourier transform.
\newblock {\em Canad. J. Math.}, 42(3): 395--409, 1990.
\bibitem{fournier-stewart85}
J.~J.~F. Fournier and J.~Stewart,
\newblock Amalgams of ${L}\sp p$ and $l\sp q$.
\newblock {\em Bull. Amer. Math. Soc. (N.S.)}, 13(1): 1--21, 1985.
\bibitem{Fei98}
H.~G. Feichtinger and G.~Zimmermann,
\newblock {A} {B}anach space of test functions for {G}abor analysis.
\newblock In H.~G. Feichtinger and T. Strohmer, editors, {\em Gabor Analysis and Algorithms. Theory and Applications}, Applied and Numerical Harmonic Analysis, 123--170. Birkh\"auser, Boston, 1998.
\bibitem{foschi}
D. Foschi, Inhomogeneous Strichartz estimates,
\newblock{\em Jour. Hyperbolic Diff. Eqs}, 2(1): 1-24, 2005
\bibitem{GinibreVelo92}
J.~Ginibre and G.~Velo,
\newblock Smoothing properties and retarded estimates for some dispersive   evolution equations.
\newblock {\em Comm. Math. Phys.}, 144(1): 163--188, 1992.
\bibitem{GinibreVelo}
J.~Ginibre and G.~Velo,
\newblock Generalized Strichartz Inequalities for the Wave Equation.
\newblock {\it J. Funct. Anal.}, 133: 50--68, 1995.
\bibitem{grochenig} K. Gr\"ochenig, {\it Foundation of Time-Frequency Analysis}. Birkh\"auser, Boston MA, 2001.
\bibitem{Heil03}
C. Heil.
\newblock {A}n introduction to weighted {W}iener amalgams.
\newblock In M. Krishna, R. Radha and S. Thangavelu, editors, {\em
Wavelets and their Applications}, 183--216. Allied Publishers Private Limited, 2003.
\bibitem{hormander}
L.~H{\"o}rmander, Estimates for translation invariant operators in $L^p$ spaces, Acta Math., 104: 93--140, 1960.
\bibitem{Kato70}
T.~Kato,
\newblock Linear evolution equations of ``hyperbolic'' type,
\newblock {\em J. Fac. Sci. Univ. Tokyo Sect. I}, 17: 241--258, 1970.
\bibitem{kato}
T.~Kato,
\newblock An $L^{q,r}$-theory for nonlinear Schr\"odinger equations, Spectral and scattering theory and applications, Adv. Stud. Pure Math.
\newblock {\em Math. Soc. Japan}, Tokyo, 23: 223--238, 1994.
\bibitem{keel}
M. Keel and T. Tao,
\newblock { Endpoint Strichartz estimates}.
\newblock {\it Amer. J. Math.}, 120: 955--980, 1998.
\bibitem{mnno}
S. Machihara, M. Nakamura, K. Nakanishi and T. Ozawa,
\newblock {Endpoint Strichartz estimates and global solutions for the nonlinear Dirac equation},
\newblock {\em J. Func. Anal.}, 219: 1--20 2005.
\bibitem{montgomery}
S.J. Montgomery-Smith,
\newblock  Time decay for the bounded mean oscillation of solutions of the Schr\"odinger and wave equation,
\newblock {\em Duke Math. J.}, 19: 393--408, 1998.
\bibitem{nicola}
F. Nicola, Remarks on dispersive estimates and curvature, Commun. Pure Appl. Anal. 6: 203-212 (2007).
\bibitem{oberlin}
D.M. Oberlin, Convolution estimates for some distributions with singularities on the light cone,
\newblock {\em Duke Math. J.}, 59(3): 747--757, 1989.
\bibitem{segal}
I. E.  Segal,
\newblock  Space-time decay for solutions of wave equations,
\newblock {\em Adv. Math.}, 22: 304--311, 1976.
\bibitem{stefanov}
A.~Stefanov,
\newblock Strichartz estimates for the Schr\"odinger equation with radial data,
\newblock {\em Proc. Amer. Math. Soc.}, 129(5): 1395--1401, 2001.
\bibitem{stein}
E. M. Stein,
\newblock {\it Singular integrals and differentiability properties of functions}.
\newblock Princeton University Press, Priceton, 1970.
\bibitem{stein93}
E. M. Stein,
\newblock {\it Harmonic analysis}.
\newblock Princeton University Press, Priceton,1993.
\bibitem{steinweiss}
E. M. Stein and G. Weiss,
\newblock {\it Introduction to Fourier Analysis on Euclidean spaces}.
\newblock Princeton University Press, 1971.
\bibitem{strichartz} R. S. Strichartz, Restriction of Fourier transform to quadratic surfaces and decay of solutions of wave
equations. {\it Duke Math. J.}, 44: 705-774, 1977.
\bibitem{sugimototomita}
M. Sugimoto and N. Tomita,
\newblock {{T}he  dilation property of modulation spaces and their inclusion relation with Besov spaces}.
\newblock {\em J. Funct. Anal.}, 248: n1 79-106, 2009.
\bibitem{tao2}
T. Tao,
\newblock {Spherically averaged endpoint  Strichartz estimates for the two-dimensional Schr\"odinger equation}. {\it Comm.\ Partial differential Equations}, 25: 1471--1485, 2000.
\bibitem{tao}
T. Tao,
\newblock {\it Nonlinear Dispersive Equations: Local and Global Analysis}.
\newblock CBMS Regional Conference Series in Mathematics, Amer. Math. Soc., 2006.
\bibitem{Toft04}
J.~Toft,
\newblock Continuity properties for modulation spaces, with applications to
  pseudo-differential calculus. {I}.
\newblock {\em J. Funct. Anal.}, 207(2): 399--429, 2004.
\bibitem{tomas}
P.~Tomas,
\newblock A restriction theorem for the Fourier transform,
\newblock {\em Bull. Amer. Math. Soc.}, 81(1): 477--478, 1975.
\bibitem{triebel}
H. Triebel.
\newblock {\it Interpolation theory, function spaces, differential operators}.
\newblock North-Holland, 1978.
\bibitem{vilela}
M.C.~Vilela.
\newblock Strichartz estimates for the nonhomogeneous Schr\"odinger equation,
\newblock {\em Trans. Amer. Math. Soc.}, 359(5): 2123--2136, 2007.
\bibitem{baoxiang} B. Wang, L. Zhao and B. Guo. Isometric decomposition operators, function spaces $E_{p,q}^\lambda$ and
applications to nonlinear evolution equations. {\it J. Funct. Anal.}, 233(1): 1--39, 2006.
\bibitem{baoxiang2} B. Wang, H. Hudzik. The global Cauchy problem for the NLS
  and NLKG with small rough data. {\it J. Differential Equations},
232: 36--73, 2007.
\bibitem{wiener}
N.~Wiener,
\newblock On the representation of functions by trigonometric integrals,
\newblock {\em Math. Z.}, 24: 575-616, 1926.
\bibitem{wiener2}
N.~Wiener,
\newblock Tauberian theorems,
\newblock {\em Ann. of Math.}, 33: 1-100, 1932.
\bibitem{wiener3}
N.~Wiener,
\newblock {\em The Fourier integral and certain of its application},
\newblock  MIT Press Cambridge, 1933.
\bibitem{Yajima87}
K.~Yajima.
\newblock Existence of solutions for {S}chr\"odinger evolution equations.
\newblock {\em Comm. Math. Phys.}, 110(3): 415--426, 1987.
\bibitem{zucco} D. Zucco, Stime dispersive e di Strichartz per l'equazione di Schr\"odinger, Master Thesis, University of Torino, 2008.



\end{thebibliography}
\end{document}